\newtheorem{defn0}{Definition}[section]
\newtheorem{prop0}[defn0]{Proposition}
\newtheorem{conj0}[defn0]{Conjecture}
\newtheorem{thm0}[defn0]{Theorem}
\newtheorem{lem0}[defn0]{Lemma}
\newtheorem{corollary0}[defn0]{Corollary}
\newtheorem{example0}[defn0]{Example}
\newtheorem{remark0}[defn0]{Remark}
\newtheorem{que0}[defn0]{Question}
\newenvironment{defn}{\begin{defn0}}{\end{defn0}}
\newenvironment{prop}{\begin{prop0}}{\end{prop0}}
\newenvironment{conj}{\begin{conj0}}{\end{conj0}}
\newenvironment{thm}{\begin{thm0}}{\end{thm0}}
\newenvironment{lem}{\begin{lem0}}{\end{lem0}}
\newenvironment{que}{\begin{que0}}{\end{que0}}
\newenvironment{cor}{\begin{corollary0}}{\end{corollary0}}
\newenvironment{exm}{\begin{example0}\rm}{\end{example0}}
\newenvironment{rmk}{\begin{remark0}\rm}{\end{remark0}}
\newcommand{\V}{{\bf V }}
\renewcommand{\P}{{\mathbb{P}}}
\newcommand{\Z}{{\mathbb{Z}}}
\newcommand{\A}{{\mathbb{A}}}
\newcommand{\FF}{{\mathbb{F}}}
\newcommand{\Oc}{{\mathcal{O}}}
\newcommand{\OPP}{{\mathcal{O}_{\mathbb{P}^1 \times \mathbb{P}^1}}}
\newcommand{\PP}{{\mathbb{P}^1 \times \mathbb{P}^1}}
\newcommand{\Pone}{{\mathbb{P}^1}}
\newcommand{\OPone}{{\mathcal{O}_{\mathbb{P}^1}}}
\newcommand{\spn}{{\mathrm{Span}}}
\newcommand{\syz}{{\mathrm{Syz}}}
\newcommand{\DET}{{\mathrm{det}}}
\newcommand{\rank}{{\mathrm{rank}}}
\newcommand{\im}{{\mathrm{im}}}
\newcommand{\depth}{{\mathrm{depth}}}
\newcommand{\coker}{{\mathrm{coker}}}
\newcommand{\Ass}{{\mathrm{Ass}}}
\newcommand{\Sing}{{\mathrm{Sing}}}
\newcommand{\Pic}{{\mathrm{Pic}}}
\newcommand{\HP}{{\mathrm{HP}}}
\newcommand{\mm}{{\mathfrak{m}}}
\numberwithin{equation}{section}
\begin{document}
\title{A special case of Postnikov-Shapiro Conjecture}

\author{Jimmy Jianyun Shan}
\address{Department of Mathematics, University of Illinois, 
Urbana, IL 61801}
\email{shan15@math.uiuc.edu}

\begin{abstract}
For a graph $G$, Postnikov-Shapiro \cite{PS04} construct two ideals $I_G$ and $J_G.$
$I_G$ is a monomial ideal and $J_G$ is generated by powers of linear forms. 
They proved the equality of their Hilbert series 
and conjectured that the graded Betti numbers are equal.
When $G=K_{n+1}^{l,k}$ is the complete graph on the vertices $\{0,1,\cdots, n\}$ with the edges $e_{i, j},$ $i, j\neq 0,$ of multiplicity $k$
and the edges $e_{0, i}$ of multiplicity $l,$ for two non-negative integers $k$ and $l,$ 
they gave an explicit formula for the graded Betti numbers of $I_G,$ which are conjecturally the same for $J_G.$
We prove this conjecture in the case $n=3,$ which was also conjectured by Schenck \cite{S04}.
\end{abstract}

\maketitle

\section{Introduction}
Let $R = \mathbb{K}[x_1, \cdots, x_n]$ be a polynomial ring over a field $\mathbb{K}$ of characteristic 0. We consider the following families
of $2^n-1$ generated ideals:
\begin{gather*}
 I_{\phi} = \langle x_1^{\phi(1)},\cdots, x_n^{\phi(1)},(x_1 x_2)^{\phi(2)},\cdots,(x_{i_1} \cdots x_{i_r})^{\phi(r)},\cdots \rangle,\\
 J_{\phi} = \langle x_1^{\phi(1)},\cdots, x_n^{\phi(1)},(x_1+x_2)^{2\phi(2)},\cdots,(x_{i_1}+ \cdots+ x_{i_r})^{r\phi(r)},\cdots \rangle,
\end{gather*}
where $\phi$ is a linear degree function:
$$ \phi(r) = l+k(n-r) > 0, l,k \in \mathbb{N}.$$

These ideals are special cases of the ideals $I_G,$ $J_G$ constructed from a graph $G$ by Postnikov-Shapiro \cite{PS04}. When $G=K_{n+1}^{l,k},$ 
the complete graph on the vertices $\{0,1,\cdots, n\}$ with the edges $e_{i, j},$ $i, j\neq 0,$ of multiplicity $k$
and the edges $e_{0, i}$ of multiplicity $l,$ for two non-negative integers $l$ and $k,$ $I_G =I_{\phi} $ and $J_G = J_{\phi}.$ Postnikov-Shapiro \cite{PS04} proved that 
the ideals $I_{\phi}$ and $J_{\phi}$ have the same Hilbert series. They also gave the following minimal free resolution of $R/I_{\phi}$ 
\begin{equation} \label{cpxI}
... \longrightarrow C_3 \longrightarrow C_2 \longrightarrow C_1 \longrightarrow C_0 = R \longrightarrow R/I_{\phi} \longrightarrow 0, 
\end{equation}
with 
\begin{equation} \label{eq1a}
 C_i = \bigoplus_{l_1,l_2,\cdots,l_i} R(-d(l_1,\cdots,l_i))^{\binom{n}{l_1,\cdots,l_i}},
\end{equation}
where the direct sum is over $l_1,\cdots,l_i \geq 1$ such that $l_1 + \cdots + l_i \leq n,$
\[
 d(l_1,\cdots, l_i) =l_1 \phi(l_1) + l_2 \phi(l_1+l_2) + \cdots + l_i \phi(l_1+\cdots+l_i),  
\]
and 
\[
 \binom{n}{l_1,\cdots,l_i} = \frac{n!}{l_1! \cdots l_i! (n-l_1-\cdots-l_i)!}
\]
is the multinomial coefficient. This means the graded Betti numbers of $I_{\phi}$ are given by 
\[
 b_{i,d(l_1,\cdots, l_i)} = \binom{n}{l_1,\cdots,l_i}.
\]
Moreover, the $i$-th Betti number is given by 
\[
b_{i}(I_{\phi}) = i! S(n+1,i+1), 
\]
where $S(n+1,i+1)$ is the Stirling number of the second kind, i.e., the number of partitions of 
the set $\{0,1,\cdots, n\}$ into $i+1$ nonempty subsets.  More intrinsically, a minimal free resolution of the ideal
$I_{\phi}$ is given by the cellular complex corresponding to the simplicial complex $\Delta,$ which is the barycentric
subdivision of the $(n-1)$-dimensional simplex.

\begin{conj}\cite{PS04} \label{conj1}
The graded Betti numbers of $J_{\phi}$ are also given by \eqref{eq1a}.
\end{conj}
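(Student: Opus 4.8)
The plan is to work throughout in $R = \mathbb{K}[x_1,x_2,x_3]$ and to exploit that, for $n=3$, the ideal $J_\phi$ is $\mm$-primary (it contains the powers $x_i^{\phi(1)}$), so $R/J_\phi$ is Artinian and, by Auslander--Buchsbaum, has projective dimension $3$; its minimal free resolution is
\[
 0 \to F_3 \to F_2 \to F_1 \to R \to R/J_\phi \to 0 .
\]
Write $\beta_{i,j}$ for the graded Betti numbers of $R/J_\phi$ and $b_{i,j}$ for those of $R/I_\phi$ from \eqref{eq1a}. Since Postnikov--Shapiro proved $I_\phi$ and $J_\phi$ have the same Hilbert series, the numerators $\sum_{i,j}(-1)^i\beta_{i,j}t^j$ and $\sum_{i,j}(-1)^i b_{i,j}t^j$ coincide, so $-\beta_{1,j}+\beta_{2,j}-\beta_{3,j} = -b_{1,j}+b_{2,j}-b_{3,j}$ for every $j>0$. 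Hence the organizing idea is: \emph{it suffices to pin down only the first and last Betti numbers}. Once I show $\beta_{1,j}=b_{1,j}$ and $\beta_{3,j}=b_{3,j}$ for all $j$, the middle numbers $\beta_{2,j}=b_{2,j}$ follow for free, and there is no need to control cancellations directly.

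Set $a=\phi(1)=l+2k$, $b=2\phi(2)=2l+2k$ and $c=3\phi(3)=3l$, so $J_\phi$ is generated by the three forms $x_i^{a}$, the three forms $(x_i+x_j)^{b}$, and $(x_1+x_2+x_3)^{c}$. The first step is to prove these seven forms are a minimal generating set, i.e. $\beta_{1,a}=3$, $\beta_{1,b}=3$, $\beta_{1,c}=1$ and $\beta_{1,j}=0$ otherwise, matching $b_{1,j}$. As $J_\phi$ is generated in these degrees, only non-redundancy must be checked, and since $\dim(J_\phi)_j=\dim R_j-\dim(R/I_\phi)_j$ is already known from the Hilbert function, this reduces to verifying that the generators are independent modulo $\mm J_\phi$. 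I would carry this out by a short, case-by-case argument on the order of $a,b,c$ (for the degenerate case $k=0$, where $J_\phi$ collapses to the complete intersection $(x_1^{l},x_2^{l},x_3^{l})$, the claim is checked separately). This is the more routine half of the problem.

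By graded local duality, for an Artinian quotient of $R$ one has $\beta_{3,j}(R/J_\phi)=\dim_{\mathbb{K}}\mathrm{Soc}(R/J_\phi)_{j-3}$. The target values $b_{3,j}$ say that $R/I_\phi$ is \emph{level}: its socle sits in the single top degree $3(l+k-1)$, of dimension $6$. Because $R/J_\phi$ and $R/I_\phi$ have the same Hilbert function, the top degree and the dimension $\dim(R/J_\phi)_{3(l+k-1)}=6$ agree automatically, so the full content of matching $\beta_3$ is to prove that $R/J_\phi$ is \emph{also} level, i.e. $\mathrm{Soc}(R/J_\phi)_s=0$ for all $s<3(l+k-1)$: there are no low socle elements. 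I plan to attack this via Macaulay inverse systems. In the dual ring $S=\mathbb{K}[y_1,y_2,y_3]$, the inverse system $J_\phi^{-1}=\bigcap_i\mathrm{Ann}(\ell_i^{a_i})$ has minimal generators dual to $\mathrm{Soc}(R/J_\phi)$, and levelness is equivalent to $J_\phi^{-1}$ being generated, under the differentiation action of $R$, in its single top degree. The seven linear forms are dual to the seven points $[1:0:0],\dots,[1:1:1]$ of the $\P^2(\FF_2)$ configuration, so each $\mathrm{Ann}(\ell_i^{a_i})$ is explicit and the intersection can be studied through the collinearity relations of these points. Equivalently, filtering by the complete intersection $\mathfrak{c}=(x_1^{a},x_2^{a},x_3^{a})\subseteq J_\phi$, Gorenstein duality in $A=R/\mathfrak{c}$ turns the vanishing of low socle into the statement that $(0:_A J_\phi/\mathfrak{c})$ has no low-degree minimal generators, which is governed by the ranks of multiplication by the powers $(x_i+x_j)^{b}$ and $(x_1+x_2+x_3)^{c}$ on $A$ — strong-Lefschetz-type data for the monomial complete intersection $A$, available in characteristic $0$.

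The main obstacle, and where the special geometry must enter essentially, is that these forms are far from generic: each coordinate line contains three of the seven points, so the maximal-rank statements one needs do \emph{not} follow from the generic fat-point theory and must be proved for this exact configuration, uniformly in $l$ and $k$. I expect the delicate case to be $l=3k$, the unique choice for which the generator degree $c=3l$ collides with the degree $\phi(1)+\phi(2)=2l+3k$ of the $(1,1)$-type second syzygies of $I_\phi$; there the Hilbert function cannot separate $\beta_1$ from $\beta_2$, and a spurious low socle element must be ruled out by the apolarity analysis rather than by numerics. Once levelness is established in all cases, the reduction of the first paragraph closes the proof.
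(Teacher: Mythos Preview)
Your reduction is sound: with projective dimension $3$ and the Hilbert series known, pinning down $\beta_{1,\ast}$ and $\beta_{3,\ast}$ does force $\beta_{2,\ast}$. This is a clean framing, and it is different from the paper's, which works at the level of first syzygies throughout. But the proposal stops at exactly the point where the difficulty begins. Levelness of $R/J_\phi$ --- equivalently, the vanishing of second syzygies below degree $3l+3k$ --- is not proved; you only gesture at inverse systems and ``strong-Lefschetz-type data'' for multiplication by $(x_i+x_j)^{b}$ and $(x_1+x_2+x_3)^{c}$ on the monomial complete intersection $A=R/(x_1^{a},x_2^{a},x_3^{a})$. Those rank statements are precisely what is hard here: the seven dual points form the $\P^2(\FF_2)$ configuration, in which every line through two of them contains a third, so none of the generic maximal-rank or SHGH-type results apply, and you yourself say the special geometry ``must enter essentially.'' You have identified the obstruction correctly (and correctly flagged the collision $l=3k$), but you have not removed it; the minimality of the seven generators in the collision cases is likewise asserted rather than argued.

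The paper's approach differs in execution, though it ultimately confronts the same obstruction. Rather than going through the socle, it constructs the twelve first syzygies explicitly: each of six subideals $J_0,\dots,J_5$ essentially in two variables (such as $\langle x^{l+2k}, y^{l+2k}, (x+y)^{2l+2k}\rangle$ or $\langle x^{l+2k}, (y+z)^{2l+2k}, (x+y+z)^{3l}\rangle$) has a Hilbert--Burch resolution, and the two columns of each Hilbert--Burch matrix extend to first syzygies of $J_\phi$, six of degree $2l+3k$ and six of degree $3l+2k$. Minimality of these twelve is checked directly by computing the syzygies among the columns of the assembled $7\times 6$ blocks. The hard step --- ruling out extra first syzygies in degrees strictly between $\max(2l+3k,3l+2k)$ and $3l+3k$ --- is handled by showing the corresponding second-syzygy spaces vanish, which the paper converts (via colon ideals and Emsalem--Iarrobino duality) into explicit fat-point $h^0$ computations on the blow-up of $\P^2$ at four, five, and six of the seven points, evaluated with Harbourne's nef-reduction algorithm. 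So the paper's fat-point calculus is doing, concretely and case by case in $l$ versus $k$, exactly the work your Lefschetz sketch leaves undone; there is no indication your route would be shorter even if fully carried out.
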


In the case $n=3,$ the ideals $I_{\phi}$ and $J_{\phi}$ are given by
\begin{align*} \label{eqIJ3}
I_{\phi}& = \langle x^{l+2k}, y^{l+2k}, z^{l+2k}, (xy)^{l+k}, (xz)^{l+k}, (yz)^{l+k}, (xyz)^{l} \rangle, \\
J_{\phi}& = \langle x^{l+2k}, y^{l+2k}, z^{l+2k}, (x+y)^{2l+2k}, (x+z)^{2l+2k}, (y+z)^{2l+2k}, (x+y+z)^{3l} \rangle. 
\end{align*}

Schenck \cite{S04} computed the Hilbert series of $R/J_{\phi},$ using ideals of fatpoints: 
\begin{equation} \label{hs}
HS(R/J_{\phi},t)=\frac{1-3t^{l+2k} - 3t^{2l+2k} - t^{3l} + 6 t^{2l+3k} + 6t^{3l+2k} - 6t^{3l+3k}}{(1-t)^3}.
\end{equation}

and conjectured that:
\begin{conj} \label{conj2}
In the case $n=3,$ the minimal free resolution of $J_\phi$ is:
$$0 \xrightarrow{} R(-3l-3k)^6 \xrightarrow{}
\begin{array}{c}
 R(-3l-2k)^6\\
\oplus\\
R(-2l-3k)^6 \\
\end{array}
\xrightarrow{}
\begin{array}{c}
R(-l-2k)^3\\ 
\oplus\\
R(-2l-2k)^3 \\
 \oplus \\
R(-3l)\\
\end{array}
\xrightarrow{}
R 
\xrightarrow{}
R/J_\phi \xrightarrow{} 0.$$
In other words, the graded Betti numbers of $J_{\phi}$ are the same as those of $I_{\phi}.$
\end{conj}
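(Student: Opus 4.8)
The plan is to leverage two facts already in hand: the explicit minimal free resolution \eqref{eq1a} of $R/I_\phi$, and Schenck's computation \eqref{hs} showing $HS(R/J_\phi,t)=HS(R/I_\phi,t)$. Since the numerator in \eqref{hs} is a polynomial, $R/J_\phi$ is Artinian; as $R$ has three variables, $\depth(R/J_\phi)=0$ and Auslander--Buchsbaum gives projective dimension $3$, so the minimal free resolution of $R/J_\phi$ has the shape $0\to C_3'\to C_2'\to C_1'\to R\to R/J_\phi\to 0$. Writing $\beta_{i,j}=\beta_{i,j}(J_\phi)$, equality of Hilbert series yields, for every $j$, the relation $\sum_i(-1)^i\beta_{i,j}=\sum_i(-1)^i\beta_{i,j}(I_\phi)$, the right side being the coefficient of $t^j$ in the numerator of \eqref{hs}. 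Thus it suffices to prove $\beta_{i,j}(J_\phi)=\beta_{i,j}(I_\phi)$ for all $i,j$. I will assume throughout $k,l\ge 1$, so that the seven listed generators of $I_\phi$ (hence the resolution \eqref{eq1a}) are genuinely minimal; the boundary value $k=0$ collapses several generators and is treated separately as a degenerate case.

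The main strategy is to bound the Betti numbers of $J_\phi$ from above by those of $I_\phi$ and then pin down equality using the Hilbert-series relation. First I would show that $I_\phi$ is the initial ideal of $J_\phi$ for a suitable term order $<$: the inclusion $I_\phi\subseteq\mathrm{in}_<(J_\phi)$ is checked by exhibiting, for each of the seven monomial generators of $I_\phi$, an explicit element of $J_\phi$ having that monomial as its leading term (the pure powers are immediate; the mixed generators $(x_ix_j)^{l+k}$ and $(xyz)^l$ require combining several of the forms $(\sum_{i\in S}x_i)^{|S|\phi(|S|)}$), whereupon $\mathrm{in}_<(J_\phi)=I_\phi$ follows because both monomial ideals share the Hilbert series \eqref{hs}. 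A Gr\"obner degeneration being flat, upper-semicontinuity gives $\beta_{i,j}(J_\phi)\le\beta_{i,j}(\mathrm{in}_<(J_\phi))=\beta_{i,j}(I_\phi)$. By Peeva's consecutive-cancellation theorem, $\beta_{\bullet}(J_\phi)$ is obtained from $\beta_{\bullet}(I_\phi)$ by a sequence of cancellations of pairs $\beta_{i,j},\beta_{i+1,j}$. Inspecting the degrees in \eqref{eq1a} for $n=3$, no two consecutive homological stages share a degree unless a numerical coincidence holds: the only possibility is $3l=2l+3k$, i.e.\ $l=3k$, where the generator degree $3l$ meets the second-syzygy degree $2l+3k$. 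Hence for $l\ne 3k$ no cancellation is available and $\beta(J_\phi)=\beta(I_\phi)$ at once.

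It remains to treat the coincidence $l=3k$, and this is where I expect part of the real work to lie. Here the only cancellation permitted by the bound would lower $\beta_{1,9k}$ from $1$ to $0$ and $\beta_{2,9k}$ from $6$ to $5$; ruling it out amounts to proving that $(x+y+z)^{3l}$ is a minimal generator of $J_\phi$, i.e.\ that it does not lie in the degree-$3l$ part of the ideal generated by the other six forms. I would verify this by a direct apolarity/linear-algebra argument at the seven points $[\,\textstyle\sum_{i\in S}e_i\,]\in\P^2$ dual to the linear forms: producing one degree-$3l$ form apolar to the other six powers but not to $(x+y+z)^{3l}$ certifies non-membership. The genuinely delicate point, and the main obstacle overall, is the construction in the previous paragraph establishing $\mathrm{in}_<(J_\phi)=I_\phi$; if a clean term order cannot be found, the fallback is to write down the maps of the conjectured resolution explicitly, using the classical syzygies among powers of collinear and coplanar linear forms, and to verify exactness through the Buchsbaum--Eisenbud acyclicity criterion, checking that the ideals of the relevant minors have grade $3$ because $R/J_\phi$ is Artinian.
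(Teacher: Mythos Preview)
Your main strategy has a genuine gap: the assertion that $I_\phi=\mathrm{in}_{<}(J_\phi)$ for a suitable term order is false in general, and without it the upper bound $\beta_{i,j}(J_\phi)\le\beta_{i,j}(I_\phi)$ (hence the whole consecutive-cancellation argument) collapses. Already for $l=1$, $k=2$ one sees the problem in two variables: reducing $(x+y)^{2l+2k}=(x+y)^{6}$ modulo $x^{5},y^{5}$ leaves $15x^{4}y^{2}+20x^{3}y^{3}+15x^{2}y^{4}$, whose leading monomial under \emph{any} term order is $x^{4}y^{2}$ or $x^{2}y^{4}$, never the ``middle'' monomial $(xy)^{l+k}=x^{3}y^{3}$. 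Thus $x^{4}y^{2}\in\mathrm{in}_{<}(J_\phi)$ while $x^{4}y^{2}\notin I_\phi$, so the two monomial ideals differ. More generally, for $k\ge 2$ the reduced leading term is $x^{l+2k-1}y^{l+1}$ (or its mirror), which is not $(xy)^{l+k}$. So while your cancellation bookkeeping (the only coincidence being $l=3k$) is correct \emph{given} the bound, the bound itself is not available by Gr\"obner degeneration.

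Your fallback---write the maps explicitly and invoke Buchsbaum--Eisenbud---is also not a complete plan. One can indeed write down twelve first syzygies of the predicted degrees (coming from the six codimension-two subideals of $J_\phi$ that are essentially in two variables, via Hilbert--Burch); this is exactly what the paper does in \S\ref{cs}. But the six second syzygies of degree $3l+3k$ are \emph{not} known explicitly---the paper says so in a remark after Theorem~\ref{thm3v}---so you cannot feed the whole complex into the acyclicity criterion. Moreover, ``grade $3$ because $R/J_\phi$ is Artinian'' conflates the height of $J_\phi$ with the depth of the ideals of minors of the differentials; these are different conditions.

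What the paper actually does is quite different and more laborious: after constructing the twelve first syzygies, it shows (\S\ref{gm}) that no additional first syzygies can appear in degrees $\le\max(2l+3k,3l+2k)$ by directly computing the syzygy module of the constructed syzygies (Lemmas~\ref{mylm} and the unnamed lemma following it). The hard step (\S\ref{comp}) is ruling out first syzygies in degrees strictly between $\max(2l+3k,3l+2k)$ and $3l+3k$; this is done by bounding the second syzygies, which in turn requires a Hilbert-function computation for an intersection of colon ideals. That computation is carried out via Emsalem--Iarrobino inverse systems and Harbourne's algorithm for $h^0$ of divisors on a blow-up of $\P^2$ (Propositions~\ref{pl1a}--\ref{pl3}). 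Only after the first syzygies are completely determined does the Hilbert series \eqref{hs} pin down the six second syzygies. None of this is captured by a semicontinuity shortcut.
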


\begin{rmk}
We restrict to the case $char(\mathbb{K}) = 0,$ because in low characteristic, the ideals $I_{\phi}$ and $J_{\phi}$ don't 
even have the same Hilbert series, as pointed out by Schenck \cite{S04}. Also because of the use of inverse systems in our proof, 
which is more complicated in positive characteristic.
\end{rmk}

In this paper, we prove Conjecture \ref{conj2}. Because of the form of the Hilbert series, we only need to show all the first syzygies 
or the second syzygies of $J_{\phi}$ are predicted by Conjecture \ref{conj2}. We show that 
the first syzygies of the ideal $J_{\phi}$ are given by 
\[
  R(-2l-3k)^6 \oplus R(-3l-2k)^6,
\]
which means that there are six syzygies of degree $2l+3k$ and six syzygies of degree $3l+2k.$ 
There are two main ideas in this approach; the first is to construct explicitly 
the first syzygies of the ideal $J_{\phi}$ from its subideals, and the second is to show the constructed syzygies
generate all the first syzygies. The main difficulty is to show that there are no first syzygies of degree bigger than 
$\max(2l+3k,3l+2k).$ We prove this by showing that there are no second syzygies of the same degree.
Interestingly, we use fatpoints in the proof. 

We stress that most work (see \cite{EI95}, \cite{G96}, and \cite{I97} for example) on ideals generated by powers of linear forms uses Macaulay Inverse Systems,
to translate into questions about fatpoints. The proof of the above conjecture can be seen as a step to understanding ideals generated by powers of 
linear forms from a new perspective. 

\smallskip
\pagebreak

\section{The Proof of Conjecture \ref{conj2}} \label{sec2}


\subsection{Construction of syzygies} \label{cs}

Consider the following six subideals $J_i$ of $J_{\phi},$ $i =0, 1, 2, 3, 4, 5,$
\begin{align*}
J_0 &= \langle x^{l+2k}, y^{l+2k}, (x+y)^{2l+2k} \rangle,    \\ 
J_1 &= \langle x^{l+2k}, z^{l+2k}, (x+z)^{2l+2k} \rangle,    \\ 
J_2 &= \langle y^{l+2k}, z^{l+2k}, (y+z)^{2l+2k} \rangle,    \\  
J_3 &= \langle x^{l+2k}, (y+z)^{2l+2k}, (x+y+z)^{3l} \rangle,\\  
J_4 &= \langle y^{l+2k}, (x+z)^{2l+2k}, (x+y+z)^{3l} \rangle,\\  
J_5 &= \langle z^{l+2k}, (x+y)^{2l+2k}, (x+y+z)^{3l} \rangle.    
\end{align*}

In the polynomial ring $R,$ the ideal 
\[
 J_0 = \langle x^{l+2k}, y^{l+2k}, (x+y)^{2l+2k} \rangle
\]
is codimension two and has the following minimal free resolution by the Hilbert-Burch Theorem \cite{EI95}, 
\begin{small}
\[
0 \longrightarrow R(-2l-3k)^2 \xrightarrow{ \phi_{0} } 
\begin{array}{c}
 R(-l-2k)^2\\
\oplus\\
R(-2l-2k) \\
\end{array}\!
\xrightarrow{\psi_{0}}
R
\xrightarrow{} R/{J_0}
\xrightarrow{} 0,
\]
\end{small}
where 
\[
\psi_{0} = \left[ \!
\begin{array}{ccc}
x^{l+2k} & y^{l+2k} & (x+y)^{2l+2k}   
\end{array}\! \right].
\]
and 
\[
\phi_{0} = \left[ \!
\begin{array}{cc}
a_1 & a_2  \\
b_1 & b_2  \\
c_1 & c_2  \\
\end{array}\! \right].
\]
Here $a_1,a_2,b_1,b_2$ are polynomials of degree $l+k,$ $c_1,c_2$ are polynomials of degree $k.$  
The entries of $\phi_0$ satisfy the following equations:
\begin{equation}\label{eqJ0}\\
\begin{aligned}
b_1 c_2 - b_2 c_1 &= \alpha_0  x^{l+2k},   \\
a_1 c_2 - a_2 c_1 &=  -\alpha_0 y^{l+2k},  \\
a_1 b_2 - a_2 b_1 & = \alpha_0 (x+y)^{2l+2k},
\end{aligned} 
\end{equation}
for some nonzero constant $\alpha_0.$ Moreover, we have
\begin{equation} \label{col1}
 \langle a_1, a_2 \rangle  = \langle y^{l+2k},(x+y)^{2l+2k} \rangle: x^{l+2k}.
\end{equation}

The two syzygies of the ideal $J_0,$ given by $(a_1,b_1,c_1)^{t}$ and $(a_2,b_2,c_2)^{t},$ are of degree $2l+3k.$ 
They can be naturally extended to syzygies of the ideal $J_{\phi} $ as follows, 
\begin{align*} \label{sz12}
 s_1 & = (a_1,b_1,0,c_1, 0,0,0)^t, \\
 s_2 & = (a_2,b_2,0,c_2, 0,0,0)^t.
\end{align*}
Therefore we have obtained two first syzygies of degree $2l+3k$ of the ideal $J_{\phi}.$ 

The other two ideals $J_1, J_2$ have completely similar minimal free resolutions, with the matrix of the first differential given by
\[ \phi_{1} = 
\left[ \!
\begin{array}{cc}
d_1 & d_2  \\
e_1 & e_2  \\
f_1 & f_2  \\
\end{array}\! \right], \mbox{ }
\phi_{2} = 
\left[ \!
\begin{array}{cc}
g_1 & g_2  \\
h_1 & h_2  \\
k_1 & k_2  \\
\end{array}\! \right].
\]

The entries of $\phi_1, \phi_2$ satisfy the following equations:
\begin{equation}\label{eqJ12}
\begin{aligned}
e_1 f_2 - e_2 f_1 &= \alpha_1  x^{l+2k},   &  h_1 k_2 - h_2 k_1 &= \alpha_2  y^{l+2k},              \\ 
d_1 f_2 - d_2 f_1 &= -\alpha_1 z^{l+2k},   &  g_1 k_2 - g_2 k_1 &= -\alpha_2  z^{l+2k},             \\
d_1 e_2 - d_2 e_1 &= \alpha_1 (x+z)^{2l+2k},&  g_1 h_2 - g_2 h_1 &= \alpha_2  (y+z)^{2l+2k},       
\end{aligned} 
\end{equation}
for some nonzero constant $\alpha_1, \alpha_2. $ 

We also have 
\begin{equation} \label{col2}
 \langle d_1, d_2 \rangle  = \langle z^{l+2k},(x+z)^{2l+2k} \rangle: x^{l+2k}.
\end{equation}

The two syzygies of $J_1$ and those of $J_2$ can also be extended to syzygies of the ideal $J_{\phi},$ given by 
\begin{align*} \label{sz12}
 s_3 & = (d_1,0,e_1,0,f_1,0,0)^t, \\
 s_4 & = (d_2,0,e_2,0,f_2,0,0)^t, \\
 s_5 & = (0,g_1,h_1,0,0,k_1,0)^t, \\
 s_6 & = (0,g_2,h_2,0,0,k_2,0)^t.
\end{align*}
Therefore, we have constructed six first syzygies of degree $2l+3k$ from the ideals $J_0, J_1, J_2.$ In \S\ref{gm}, we 
show these syzygies are independent. 

To construct six first syzygies of degree $3l+2k,$ we consider the subideals $J_3,$ $J_4,$ and $J_5.$ For example, the ideal
\[
 J_3  = \langle x^{l+2k}, (y+z)^{2l+2k}, (x+y+z)^{3l} \rangle
\]
is essentially an ideal in two variables $x, y+z$ and has a Hilbert-Burch resolution,
\begin{small}
\[
0 \longrightarrow R(-3l-2k)^2 \xrightarrow{ \phi_{3} } 
\begin{array}{c}
R(-l-2k)\\
\oplus\\
R(-2l-2k) \\
\oplus\\
R(-3l) 
\end{array}\!
\xrightarrow{\psi_{3}}
R
\xrightarrow{} R/{J_3}
\xrightarrow{} 0,
\]
\end{small}
where the matrix of differential is given by
\[
\psi_{3} = \left[ \!
\begin{array}{ccc}
x^{l+2k} & (y+z)^{2l+2k} & (x+y+z)^{3l}  
\end{array}\! \right],
\]
and 
\[
\phi_{3} = \left[ \!
\begin{array}{cc}
A_1 & A_2  \\
B_1 & B_2  \\
C_1 & C_2  \\
\end{array}\! \right].
\]
Here $A_1,A_2$ are polynomials in $x,y+z$ of degree $2l,$ $B_1,B_2$ are of degree $l,$  
$C_1,C_2$ are of degree $2k.$ Similarly, the entries of $\phi_3$ satisfy the equations:
\begin{equation}\label{eqJ3}
\begin{aligned} 
B_1 C_2 - B_2 C_1 & =  \beta_0 x^{l+2k}, \\ 
A_1 C_2 - A_2 C_1 & =  -\beta_0 (y+z)^{2l+2k}, \\
A_1 B_2 - A_2 B_1 & =  \beta_0 (x+y+z)^{3l},
\end{aligned} 
\end{equation}
for some nonzero constant $\beta_0.$ Moreover, we have 
\begin{equation} \label{col3}
 \langle A_1, A_2 \rangle  = \langle (y+z)^{2l+2k},(x+y+z)^{3l} \rangle: x^{l+2k}.
\end{equation}

The two syzygies of the ideal $J_3,$ given by $(A_1, B_1, C_1)^t$ and $(A_2, B_2, C_2)^t$ are of
degree $3l + 2k.$ They can also be extended to syzygies of the ideal $J_{\phi}$ as
follows
\begin{align*} \label{sz34}
 s_7 & = (A_1,0,0,0,0,B_1,C_1)^t, \\
 s_8 & = (A_2,0,0,0,0,B_2,C_2)^t.
\end{align*}

The ideals $J_4, J_5$ have completely similar minimal free resolutions with their matrices of first differentials given by
\[ \phi_4 = 
\left[ \!
\begin{array}{cc}
D_1 & D_2  \\
E_1 & E_2  \\
F_1 & F_2  \\
\end{array}\! \right], \mbox{ }
\phi_5 =\left[ \!
\begin{array}{cc}
G_1 & G_2  \\
H_1 & H_2  \\
K_1 & K_2  \\
\end{array}\! \right]. 
\]
Here $D_1,D_2,E_1, E_2,F_1,F_2$ are polynomials in $y, x+z$ and $G_1,G_2,H_1,H_2,K_1,K_2$ are polynomials in $z, x+y.$ 
They satisfy equations similar to Equations \eqref{eqJ3}. The two syzygies of $J_4$ and those of $J_5$ are of degree $3l+2k,$ too. 
They can be extended to syzygies of the ideal $J_{\phi},$ given by 
\begin{align*} \label{sz12}
 s_9    & = (0,D_1,0,0,E_1,0,F_1)^t, \\
 s_{10} & = (0,D_2,0,0,E_2,0,F_2)^t, \\
 s_{11} & = (0,0,G_1,H_1,0,0,K_1)^t, \\
 s_{12} & = (0,0,G_2,H_2,0,0,K_2)^t.
\end{align*}
Therefore, we have constructed six first syzygies of degree $3l+2k.$ 

\subsection{Minimal generators of the constructed syzygies} \label{gm}
Now we show the syzygies of degree $2l+3k$ and $3l+2k$ constructed above are minimal generators of the first syzygies of the ideal $J_{\phi};$ 
and there are no other first syzygies of degree at most $\max(2l+3k,3l+2k).$ 
For that purpose, we make use of the structure of the Betti diagram and the Hilbert series of $J_{\phi}.$ 
We devide our analysis into three cases, depending on $l,k.$

Case 1: $l=k.$ This case is trivial, since $2l + 3k = 3l + 2k,$ the constructed syzygies are of the same degree. 

Case 2: $k<l.$ So $2l+3k < 3l+2k.$ In this case, the six first syzygies of degree $2l+3k$ must be minimal and it is impossible
to have first syzygies of degree less than $2l+3k,$ since there are no second syzygies of the same degree to cancel those first syzygies. 

Now we show it is also impossible to have first syzygies of degree $s,$ $s=2l+3k+1, \cdots, 3l+2k-1.$ Starting with $s= 2l+3k+1,$ suppose there are $k_s$ first syzygies of degree $s,$
there must be $k_s$ second syzygies of degree $s$ of $J_{\phi},$ since there is no term $t^s$ in the numerator of the Hilbert series of $R/J_{\phi}.$ Those potential second syzygies
of degree $s$ must be the syzygies of the six syzygies of degree $2l+3k.$  However, there is no such syzygy of degree $s < 3l+2k$ by the following lemma.

\begin{lem} \label{mylm}
The degree of the syzygies of the six syzygies of degree $2l+3k$ is at least $3l+6k.$ 
\end{lem}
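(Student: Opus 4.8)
The plan is to analyze the six degree-$2l+3k$ syzygies $s_1,\ldots,s_6$ directly and show that any $R$-linear relation among them, i.e. any second syzygy, must have coefficients of degree at least $l+3k$, forcing total degree at least $(2l+3k)+(l+3k) = 3l+6k$. The key structural feature I would exploit is the \emph{support pattern} of these six vectors in $R^7$. Writing the seven coordinates as indexed by the generators $x^{l+2k}, y^{l+2k}, z^{l+2k}, (x+y)^{2l+2k}, (x+z)^{2l+2k}, (y+z)^{2l+2k}, (x+y+z)^{3l}$, the syzygies $s_1,s_2$ (from $J_0$) are supported on coordinates $\{1,2,4\}$, the pair $s_3,s_4$ (from $J_1$) on $\{1,3,5\}$, and $s_5,s_6$ (from $J_2$) on $\{2,3,6\}$. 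None of them touches the seventh coordinate. A relation $\sum_i p_i s_i = 0$ must vanish coordinate by coordinate, so I would read off the equations in each of the six active coordinates and use them to bound $\deg p_i$ from below.

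First I would set up the second-syzygy equations explicitly. Coordinate $4$ is hit only by $s_1,s_2$ via the entries $c_1,c_2$ (degree $k$), coordinate $5$ only by $s_3,s_4$ via $f_1,f_2$ (degree $k$), and coordinate $6$ only by $s_5,s_6$ via $k_1,k_2$ (degree $k$); these give $p_1 c_1 + p_2 c_2 = 0$, $p_3 f_1 + p_4 f_2 = 0$, and $p_5 k_1 + p_6 k_2 = 0$. Meanwhile coordinate $1$ couples $s_1,s_2,s_3,s_4$ through $a_1,a_2,d_1,d_2$ (degree $l+k$), coordinate $2$ couples $s_1,s_2,s_5,s_6$, and coordinate $3$ couples $s_3,s_4,s_5,s_6$. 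The isolated-coordinate relations are the cleanest entry point: because $\langle a_1,a_2\rangle$, $\langle d_1,d_2\rangle$, and their analogues are colon ideals, as recorded in \eqref{col1}, \eqref{col2}, and \eqref{col3}, the entries of each $\phi_i$ form a regular sequence up to the relations in \eqref{eqJ0} and \eqref{eqJ12}. In particular $c_1,c_2$ are the degree-$k$ entries whose cross-products with the $a,b$ entries recover $x^{l+2k}$ and $y^{l+2k}$, so the only way to kill $p_1 c_1 + p_2 c_2$ is to take $(p_1,p_2)$ in the syzygy module of $(c_1,c_2)$, and I would show that module is generated in degree $\geq l+3k$ using the relations \eqref{eqJ0}.

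The engine of the degree bound is the following: from \eqref{eqJ0}, $c_1$ and $c_2$ together with the higher-degree entries express the Koszul-type relations, and since $b_1 c_2 - b_2 c_1 = \alpha_0 x^{l+2k}$ with $\alpha_0 \neq 0$, the pair $(c_1,c_2)$ has no common factor dividing a power of $x$; more precisely $c_1,c_2$ generate an ideal whose first syzygy is generated by the single Koszul relation $(c_2,-c_1)$ of degree $2k$ only if $c_1,c_2$ form a regular sequence, but they are both degree $k$ in three variables and the true minimal syzygy involves the partners $b_1,b_2$, pushing the relevant degree up. I would combine the three isolated equations with the three coupled equations, substitute back through \eqref{eqJ0}–\eqref{eqJ12}, and track that any cancellation in the coupled coordinates propagates a factor of degree $\geq l+2k$ into the $p_i$. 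The upshot is $\deg p_i \geq l+3k$ for every $i$, hence the syzygy has degree $\geq 3l+6k$.

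\textbf{The main obstacle} will be the coupled coordinates $1,2,3$: there, four of the $p_i$ interact simultaneously, and a naive degree count is not enough because cancellation between the $J_0$-part and the $J_1$-part could a priori occur in low degree. The crux is to rule this out by using that the six generators of $J_\phi$ involved are powers of \emph{distinct} linear forms, so the shared coordinate entries (e.g. $a_1,a_2$ from $\phi_0$ versus $d_1,d_2$ from $\phi_1$, both living in coordinate $1$) generate colon ideals that are coprime away from a codimension-$\geq 2$ locus; I would make this precise via the colon-ideal descriptions \eqref{col1}–\eqref{col3} and conclude that a common relation forces divisibility by high powers of the defining linear forms. Handling this coprimeness cleanly — rather than by brute-force elimination — is where the real work lies, and I expect to invoke the regular-sequence structure of the $2\times 2$ minors in \eqref{eqJ0} and \eqref{eqJ12} to certify the degree jump.
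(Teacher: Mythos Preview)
Your overall architecture matches the paper's: use the support pattern to get three ``isolated'' equations in coordinates $4,5,6$ and three ``coupled'' equations in coordinates $1,2,3$, then extract the degree bound. But two points are muddled, and the second is where the actual argument lives.

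First, your treatment of the isolated equations is confused. You write that ``the true minimal syzygy involves the partners $b_1,b_2$, pushing the relevant degree up.'' This is wrong. From $b_1c_2-b_2c_1=\alpha_0 x^{l+2k}$ in \eqref{eqJ0} one sees immediately that $c_1,c_2$ are coprime, so their syzygy module is exactly the Koszul relation: $p_1c_1+p_2c_2=0$ forces $(p_1,p_2)=p\cdot(-c_2,c_1)$ for some polynomial $p$, with $\deg p$ completely unconstrained at this stage. The isolated equations by themselves give $\deg v_i \ge k$, nothing more. Your claim that this module is ``generated in degree $\ge l+3k$'' is false.

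Second, and more importantly, the coupled equations are not the obstacle you make them out to be; they are where the bound actually comes from, and they collapse trivially once you substitute correctly. After writing $(v_1,v_2)=p_1(-c_2,c_1)$, $(v_3,v_4)=p_2(-f_2,f_1)$, $(v_5,v_6)=p_3(-k_2,k_1)$, the coordinate-$1$ equation becomes
\[
p_1(-a_1c_2+a_2c_1)+p_2(-d_1f_2+d_2f_1)=0,
\]
and by \eqref{eqJ0}, \eqref{eqJ12} this is precisely $\alpha_0 p_1 y^{l+2k}+\alpha_1 p_2 z^{l+2k}=0$. The other two coupled equations similarly become monomial: $-\alpha_0 p_1 x^{l+2k}+\alpha_2 p_3 z^{l+2k}=0$ and $-\alpha_1 p_2 x^{l+2k}-\alpha_2 p_3 y^{l+2k}=0$. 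Coprimeness of $x^{l+2k},y^{l+2k},z^{l+2k}$ then forces $p_1\in(z^{l+2k})$, $p_2\in(y^{l+2k})$, $p_3\in(x^{l+2k})$, so each $\deg p_i\ge l+2k$, hence $\deg v_i\ge (l+2k)+k=l+3k$ and the total degree is $\ge 3l+6k$. No colon-ideal coprimeness or regular-sequence certification is needed; the Hilbert--Burch $2\times 2$ minor identities do all the work directly. Your proposed route through \eqref{col1}--\eqref{col3} is a detour around a door that is already open.
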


\begin{proof}
The six syzygies of degree $2l+3k$ are the columns of the following matrix 
\[A=
\left[ \!
\begin{array}{cccccccccccc}
a_1  &a_2  &d_1  & d_2 & 0   & 0    \\
b_1  &b_2  &0    & 0   & g_1 & g_2  \\
0 & 0      &e_1  & e_2 & h_1 & h_2  \\
c_1 &c_2   &0    & 0   & 0   & 0    \\
0 & 0      &f_1  & f_2 & 0   & 0    \\
0 & 0      &0    & 0   & k_1 & k_2  \\
0 & 0      &0    & 0   & 0   & 0   
\end{array}\! \right]. 
\]

The syzygies of these six syzygies are just the column vectors $v = (v_1, v_2, v_3, v_4, v_5,v_6)^{t},$ where each component $v_i$ is a homogeneous polynomial in $x,y,z,$ such that
\[
 A v = 0.
\]
Writting explicitly, we have
\begin{equation} \label{eqs1}
  \left\{
  \begin{array}{l }
    a_1 v_1 +a_2 v_2 + d_1 v_3 + d_2 v_4 =0 \\
    b_1 v_1 +b_2 v_2 + g_1 v_5 + g_2 v_6 =0 \\
    e_1 v_3 +e_2 v_4 + h_1 v_5 + h_2 v_6 =0 \\
    c_1 v_1 +c_2 v_2  =0 \\
    f_1 v_3 +f_2 v_4  =0 \\
    k_1 v_5 +k_2 v_6  =0 
  \end{array} \right.
\end{equation}
 
Since $c_1,c_2$ are co-prime from Equation \eqref{eqJ0}, the fourth equation implies  that
\[
 (v_1, v_2) = p_1 (-c_2, c_1),
\]
for some polynomial $p_1.$ 

Similarly, we have 
\begin{align*}
 (v_3, v_4)& = p_2 (-f_2, f_1), \\
 (v_5, v_6)& = p_3 (-k_2, k_1),
\end{align*}
for some polynomial $p_2, p_3$ from  the fifth and the last equation, respectively.

Substitute the $v_1,v_2, v_3, v_4, v_5, v_6$ into the first three equations, we get
\begin{equation}
  \left\{
  \begin{array}{l }
    p_1(-a_1c_2 +a_2 c_1) + p_2(-d_1 f_2 + d_2 f_1) =0 \\
    p_1(-b_1c_2 +b_2 c_1) + p_3(-g_1 k_2 + g_2 k_1) =0 \\
    p_2(-e_1f_2 +e_2 f_1) + p_3(-h_1 k_2 + h_2 k_1) =0 
  \end{array} \right.
\end{equation}

By Equations \eqref{eqJ0}, \eqref{eqJ12}, and \eqref{eqJ3}, the above three equations are 
\begin{equation}
  \left\{
  \begin{array}{l }
    p_1(\alpha_0 y^{l+2k})  + p_2(\alpha_1 z^{l+2k}) =0 \\
    p_1(-\alpha_0 x^{l+2k}) + p_3(\alpha_2 z^{l+2k})=0 \\
    p_2(-\alpha_1 x^{l+2k}) + p_3(-\alpha_2 y^{l+2k}) =0 
  \end{array} \right.
\end{equation}

The only solution to these equations is 
\begin{equation} \label{eqp}
 p_1 = c \alpha_1 \alpha_2 z^{l+2k}, p_2 = - c \alpha_0 \alpha_2 y^{l+2k}, \text{ and } p_3 = c \alpha_0\alpha_1 x^{l+2k},
\end{equation}
for some nonzero polynomial $c,$ possibly constant. 

Therefore, the only nonzero syzygies of the six syzygies of degree $2l+3k$ are 
\[
 v = (-c_2 p_1, c_1 p_1, -f_2 p_2, f_1 p_2, -k_2 p_3, k_1 p_3)^t,
\]
with $p_1,p_2,p_3$ given in Equation \eqref{eqp}. Since 
\begin{gather}
 \deg p_1 = \deg p_2 = \deg p_3  \geq l+2k, \\
 \deg c_i = \deg f_i = \deg k_i  = k, \text{ for } i = 1,2.
\end{gather}
Each component of $v$ is of degree at least $l+3k.$ Since the six syzygies are of degree $2l+3k,$ the degree of the syzygies 
of the six syzygies is at least $3l+6k.$
\end{proof}

Therefore there are no first syzygies of degree $s$ where $2l+3k < s < 3l+2k.$ Again, by the Hilbert series, 
the six syzygies of degree $3l+2k$ must be minimal. 

Case 3:$k>l.$ The analysis is similar to the case $k<l.$ In this case, $3l+2k < 2l+3k.$ There are no first syzygies
of degree less than $3l+2k$ and the six first syzygies of degree $3l+2k$ are minimal. 

There are also no first syzygies of degree $s$ such that $3l+2k < s< 2l+3k.$ Suppose not, then there would be second syzygies 
of the syzygies of degree $3l+2k,$ which are the columns of the matrix 
\[B=
\left[ \!
\begin{array}{cccccccccccc}
  & A_1 & A_2 & 0   & 0   & 0   & 0 \\
  & 0   & 0   & D_1 & D_2 & 0   & 0 \\
  & 0   & 0   & 0   & 0   & G_1 & G_2 \\
  & 0   & 0   & 0   & 0   & H_1 & H_2 \\
  & 0   & 0   & E_1 & E_2 & 0   & 0 \\
  & B_1 & B_2 & 0   & 0   & 0   & 0 \\
  & C_1 & C_2 & F_1 & F_2 & K_1 & K_2 
\end{array}\! \right]. 
\]
The syzygies of the six syzygies are the vectors $w = (w_1,w_2,w_3,w_4,w_5,w_6)^t$ such that $B w= 0.$ 

\begin{lem}
The only solution $w$ to the equation $B w= 0$ is $w = 0.$  
\end{lem}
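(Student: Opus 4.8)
The plan is to mimic the structure of the proof of Lemma \ref{mylm}, but to show that in Case 3 the system $Bw=0$ forces $w=0$ outright. Writing out $Bw=0$ component by component, the seventh row gives no immediate coprimality collapse (every column contributes through $C_i, F_i, K_i$), so instead I would exploit rows one through six, which pair up the columns two at a time exactly as the matrix $A$ did. Concretely, row one reads $A_1 w_1 + A_2 w_2 = 0$, row six reads $B_1 w_1 + B_2 w_2 = 0$; similarly rows two and five couple $w_3, w_4$ through $D_i, E_i$, and rows three and four couple $w_5, w_6$ through $G_i, H_i$.

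The key step is to observe that for each pair the two governing $2\times 2$ relations are independent. For the pair $(w_1, w_2)$, the entries $A_1, A_2$ (degree $2l$) and $B_1, B_2$ (degree $l$) come from $\phi_3$; I would argue that the only common solution to $A_1 w_1 + A_2 w_2 = 0$ and $B_1 w_1 + B_2 w_2 = 0$ is $w_1 = w_2 = 0$, because from $A_1 B_2 - A_2 B_1 = \beta_0 (x+y+z)^{3l} \neq 0$ (Equation \eqref{eqJ3}) the matrix $\begin{bmatrix} A_1 & A_2 \\ B_1 & B_2 \end{bmatrix}$ has nonzero determinant, hence is injective on the pair. The same determinant argument, applied to the analogues of Equation \eqref{eqJ3} for $J_4$ and $J_5$, forces $w_3 = w_4 = 0$ and $w_5 = w_6 = 0$. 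Thus $w = 0$, and the seventh row is automatically satisfied and plays no role.

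I expect the main obstacle to be purely bookkeeping: making sure the pairing of rows is correct and that each pair's $2\times 2$ submatrix really is the Hilbert--Burch matrix $\phi_3$ (or $\phi_4, \phi_5$) whose minors are the generators of $J_3$ (respectively $J_4, J_5$). Once that identification is in place, the nonvanishing of the relevant $(x+y+z)^{3l}$-type minor is immediate from the displayed equations, and injectivity of a $2\times 2$ matrix over the domain $R$ with nonzero determinant is standard. There is no analogue of the subtle degree-counting that was needed in Lemma \ref{mylm}, because here we are not computing a minimal degree of a nonzero syzygy but showing there is no nonzero syzygy at all; the conclusion $w=0$ then immediately rules out second syzygies in the range $3l+2k < s < 2l+3k$, completing Case 3.
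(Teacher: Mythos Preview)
Your proposal is correct and follows essentially the same approach as the paper's proof: both arguments pair rows $1\&6$, $2\&5$, $3\&4$ of the system $Bw=0$, observe that each pair yields a $2\times 2$ linear system whose coefficient matrix has determinant $\beta_i(x+y+z)^{3l}\neq 0$ by the Hilbert--Burch relations \eqref{eqJ3} (and their analogues for $J_4,J_5$), and conclude $w=0$ from injectivity over the domain $R$. The seventh row is indeed irrelevant, exactly as you note.
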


\begin{proof}
Writting the equation $B w= 0$ explicitly, we have
\begin{equation} \label{eqs2}
  \left\{
  \begin{array}{ c }
    A_1 w_1 +A_2 w_2 =0 \\
    D_1 w_3 +D_2 w_4 =0 \\
    G_1 w_5 +G_2 w_6 =0 \\
    H_1 w_5 +H_2 w_6 =0 \\
    E_1 w_3 +E_2 w_4 =0 \\
    B_1 w_1 +B_2 w_2 =0 \\
    C_1 w_1 +C_2 w_2 + F_1 w_3 +F_2 w_4 + K_1 w_5 +K_2 w_6 =0 
  \end{array} \right.
\end{equation}
The first and the sixth equation together imply that $w_1 = w_2 = 0,$ since 
\[
\det \left[ \!
\begin{array}{cc}
   A_1 & A_2  \\
   B_1 & B_2  \\
\end{array}\! \right] = \beta_0(x + y + z)^{3l},
      \]
by Equation \eqref{eqJ3}. Similarly, the second and the fifth equation imply that $w_3 = w_4 = 0,$ since 
\[
\det \left[ \!
\begin{array}{cc}
   D_1 & D_2  \\
   E_1 & E_2  \\
\end{array}\! \right] = \beta_1(x + y + z)^{3l}.
\]
We also have $w_5 = w_6 = 0$ from the third and the fourth equation.
\end{proof}

This shows that the constructed syzygies of degree $2l+3k$ and $3l+2k$ are minimal. 

\subsection{No higher degree first syzygies} \label{comp}
In this subsection, we show the syzygies constructed generate all the first syzygies of the ideal $J_{\phi},$ by proving that 
there are no generators of first syzygies of degree bigger than $\max(2l+3k, 3l+2k).$ 
The argument is similar to show that there are no other first syzygies of degrees at most $\max(2l+3k, 3l+2k);$ in other words,  
we show that there are no second syzygies of $J_{\phi}$ of degree bigger than $\max(2l+3k, 3l+2k),$ except those of degree $3l+3k.$ 

Because the ideal $J_{\phi}$ is Artinian, its regularity is equal to the maximum degree $d$ such that $(R/J_{\phi})_d \neq 0,$
which is equal to the highest exponent in the Hilbert series of $J_{\phi}.$ We see that the regulaity of $R/J_{\phi}$ is $3l+3k-3,$ or equivalently, 
the regulaity of $J_{\phi}$ is $3l+3k-2.$ 
Since the regularity is obtained at the last step of the minimal free resolution, the maximum degree 
of the second syzygies of $J_{\phi}$ is $3l+3k.$ Our goal is to show that there are no second syzygies of degree strictly smaller than 
$3l+3k.$ For that purpose, we consider the syzygies of the six syzygies of degree $2l+3k$ and the six syzygies of degree $3l+2k.$

We define the matrix 
\[\varPhi= A|B = 
\left[ \!
\begin{array}{cccccccccccc}
a_1  &a_2  &d_1  & d_2 & 0   & 0   & A_1 & A_2 & 0   & 0   & 0   & 0 \\
b_1  &b_2  &0    & 0   & g_1 & g_2 & 0   & 0   & D_1 & D_2 & 0   & 0 \\
0 & 0      &e_1  & e_2 & h_1 & h_2 & 0   & 0   & 0   & 0   & G_1 & G_2 \\
c_1 &c_2   &0    & 0   & 0   & 0   & 0   & 0   & 0   & 0   & H_1 & H_2 \\
0 & 0      &f_1  & f_2 & 0   & 0   & 0   & 0   & E_1 & E_2 & 0   & 0 \\
0 & 0      &0    & 0   & k_1 & k_2 & B_1 & B_2 & 0   & 0   & 0   & 0 \\
0 & 0      &0    & 0   & 0   & 0   & C_1 & C_2 & F_1 & F_2 & K_1 & K_2 
\end{array}\! \right]. 
\]

A syzygy of the six syzygies of degree $2l+3k$ and six syzygies of degree $3l+2k$ is a vector 
\[
 U =(v_1, v_2,\cdots, v_6, w_1,\cdots, w_6)^{t},
\]
where each component $v_i,w_i$ is a homogeneous polynomial of $x,y,z$ such that 
\begin{equation} \label{eqm}
\varPhi U = 0. 
\end{equation}
The degree of this syzygy is 
\[
 2l+3k + \deg v_1 = 3l+2k + \deg w_1.
\]
Since we want to show the non-existence of this syzygy of degree smaller than $3l+3k,$ we reduce to show that
it is impossible to have $\deg v_1 < l$ and $\deg w_1 < k.$ 

Writing the Equation \eqref{eqm} as two matrix equations as follows,
\begin{equation} \label{eqs2}
  \left\{
  \begin{array}{l}
    A v =  s \\
    B w = -s 
  \end{array} \right.
\end{equation}
Where $v = (v_1,v_2,\cdots, v_6)^t,$  $w = (w_1,w_2,\cdots, w_6)^t,$ and 
$s = (s_1,s_2,\cdots,s_6)^t.$

For example, the first equation we get by expanding $\varPhi U = 0$ is 
\[
 a_1 v_1 + a_2 v_2 + d_1 v_3 + d_2 v_4 + A_1 w_1 + A_2 w_2 = 0,
\]
which is equivalent to 
\begin{equation} \label{eqs21}
  \left\{
  \begin{array}{l}
    a_1 v_1 + a_2 v_2 + d_1 v_3 + d_2 v_4 =  s_1 \\
    A_1 w_1 + A_2 w_2= - s_1 
  \end{array} \right.
\end{equation}

It is clear that $s_1$ must be in the ideal $I_1$ generated by $a_1,a_2,d_1,d_2,$ and also in the ideal $I_2$ generated by $A_1,A_2.$ 
Therefore, we must have 
\begin{equation}
  s_1 \in I_{t}: = \langle a_1,a_2,d_1,d_2 \rangle \cap \langle A_1, A_2 \rangle. 
\end{equation}

Just prior to Equation (2.3), we have shown that 
\begin{gather*}
 \deg a_1 = \deg a_2 = \deg d_1 = \deg d_2 = l+k, \\
 \deg A_1 = \deg A_2 = 2l.
\end{gather*}
To show that $\deg v_1 < l$ is impossible, we just need to show that $\deg s_1 < 2l+k$ is impossible. It suffices to show that $(I_t)_{2l+k-1} = 0,$ 
or equivalently the Hilbert function $HF(R/I_t, 2l+k-1) = \binom{2l+k+1}{2}.$

Recall that, 
\begin{gather*}
 \langle a_1, a_2 \rangle = \langle y^{l+2k}, (x+y)^{2l+2k} \rangle : x^{l+2k}, \\
 \langle d_1, d_2 \rangle = \langle z^{l+2k}, (x+z)^{2l+2k} \rangle : x^{l+2k}, \\
 I_2= \langle A_1, A_2 \rangle = \langle (y+z)^{2l+2k}, (x+y+z)^{3l} \rangle : x^{l+2k}.
\end{gather*}

Therefore, we also have
\begin{gather*}
 I_1 = \langle y^{l+2k}, z^{l+2k}, (x+y)^{2l+2k}, (x+z)^{2l+2k}  \rangle : x^{l+2k}, \\
 I_t = I_1 \cap I_2, \\
 I_1 + I_2 = \langle y^{l+2k}, z^{l+2k}, (x+y)^{2l+2k}, (x+z)^{2l+2k}, (y+z)^{2l+2k}, (x+y+z)^{3l} \rangle : x^{l+2k}.
\end{gather*}

To compute $HF(R/{I_t}, 2l+k-1),$ we use the following exact sequence 
\[
 0 \rightarrow R/I_t \rightarrow R/I_1 \oplus R/I_2 \rightarrow R/(I_1 + I_2) \rightarrow  0.
\]

Therefore, we have
\begin{equation} \label{meq1}
\begin{split}
 HF(R/I_t, 2l+k-1) =& HF(R/I_1, 2l+k-1)+ HF(R/I_2, 2l+k-1) \\
                    & - HF(R/(I_1+I_2), 2l+k-1).
\end{split}
\end{equation}

\begin{rmk}
 In the above, we have assumed that $s_1 \neq 0.$ This is not really a restriction, since we have just considered the element $s_1$ from the first equation of $\varPhi U = 0,$
 we could equally consider the element $s_2$ from the second equation and $s_3$ from the third equation. They only differ by exchange of $x,y,z,$ and cannot be zero simultaneously.
 Otherwise, looking at the equation $B w = 0,$ the vanishing $s_1 = s_2 =s_3 =0$ would imply that 
\begin{align*}
(w_1,w_2) &= (-A_2,A_1),\\
(w_3,w_4) &= (-D_2,D_1),\\
(w_5,w_6) &= (-G_2,G_1).
\end{align*}
Then the last equation of $Bw = 0$ becomes
\begin{equation}
 \begin{split}
  (-C_1A_2 + C_2A_1) + (-F_1D_2 + F_2D_1) +(-K_1G_2 + K_2G_1) \\
  = \beta_0 (y+z)^{2l+2k} + \beta_1 (x+z)^{2l+2k} + \beta_2 (x+y)^{2l+2k} \neq 0,
 \end{split}
\end{equation}
a contradiction, since the $\beta_0, \beta_1, \beta_2$ are nonzero constants. 
\end{rmk}

Since the ideal $I_2,$ generated by two elements $A_1,A_2$ of degree $2l,$ is a complete intersection and thus has the following minimal free resolution
\[
 0 \rightarrow R(-4l) \rightarrow R(-2l)^2 \rightarrow R \rightarrow R/I_2 \rightarrow  0.
\]
Taking the $(2l+k-1)$-th graded piece of each component yields
\begin{equation} \label{eqi2}
\begin{split}
 HF(R/I_2,2l+k-1) &= HF(R,2l+k-1) + HF(R(-4l),2l+k-1) \\
                  & - 2 HF(R(-2l),2l+k-1) \\
                  & = \begin{cases}  \binom{2l+k+1}{2} - 2 \binom{k+1}{2}                     & \mbox{if } 2l \geq k, \\
                                     \binom{2l+k+1}{2} - 2 \binom{k+1}{2}+ \binom{k+1-2l}{2}  & \mbox{if } 2l < k.
\end{cases}
\end{split}
\end{equation}

The computation of the Hilbert function of the ideals $I_1, I_1+I_2$ is more complicated, which we tackle in the next subsection.

\subsection{Hilbert function computation}
To compute the Hilbert function of the ideals $I_1, I_1+I_2,$ we use the following exact sequence for an ideal $I \subset R $ and $f \in R,$
\begin{equation} \label{link}
 0 \rightarrow R(-d)/\langle I: f \rangle \xrightarrow{ \cdot f} R/I \rightarrow R/ \langle I,f \rangle \rightarrow  0,
\end{equation}
where $d = \deg f.$ Therefore, 
\begin{equation} \label{eqlk}
 HF(R/\langle I: f \rangle,t) = HF(R/I,t + d) - HF(R/\langle I,f \rangle,t + d).
\end{equation}

We introduce the following three ideals 
\begin{gather*}
L_1 = \langle y^{l+2k}, z^{l+2k}, (x+y)^{2l+2k}, (x+z)^{2l+2k}  \rangle, \\
L_2 = \langle x^{l+2k}, y^{l+2k}, z^{l+2k}, (x+y)^{2l+2k}, (x+z)^{2l+2k}  \rangle, \\
L_3 = \langle y^{l+2k}, z^{l+2k}, (x+y)^{2l+2k}, (x+z)^{2l+2k}, (y+z)^{2l+2k}, (x+y+z)^{3l}  \rangle. 
\end{gather*}

Therefore, 
\begin{gather*}
I_1 = L_1: x^{l+2k},  \\
I_1+I_2 = L_3: x^{l+2k}. 
\end{gather*}

Applying Equation \eqref{eqlk} to the cases $I = L_1, L_3,$ and $f = x^{l+2k},$ we get 
\begin{gather} \label{hf}
 HF(R/I_1,2l+k-1) = HF(R/L_1,3l+3k-1) - HF(R/L_2,3l+3k-1), \\
 HF(R/I_1+I_2,2l+k-1) = HF(R/L_3,3l+3k-1) - HF(R/J_{\phi},3l+3k-1).
\end{gather}

To compute the Hilbert function of $L_1, L_2, L_3,$ we use the results of Emsalem-Iarrobino \cite{EI95},
to translate to a question about fatpoints on $\mathbb{P}^2,$ and the work of Harbourne \cite{H96} 
which shows how to determine the dimension of a linear system on a blowup of $\mathbb{P}^2$ 
at eight or fewer points. 

\subsubsection{Inverse system}
In \cite{EI95}, Emsalem and Iarrobino proved that there is a close connection between ideals generated by powers of 
linear forms in $n$ variables and ideals of fatpoints on $\mathbb{P}^{n-1}.$ We use their results in the special case $n=3.$

Let ${p_1, \cdots, p_n} \in \mathbb{P}^{2}$ be a set of distinct points, 
\begin{align*}
p_i  &= [p_{i1} : p_{i2}: p_{i3}], \\ 
I(p_i)& = \wp_i \subseteq R'=\mathbb{K}[x',y',z']. 
\end{align*}
A fatpoint ideal is an ideal of the form
\begin{equation}
 F = \bigcap_{i=1}^{n} \wp_i^{\alpha_{i}+1} \subset R'.
\end{equation}

We also define the linear forms correspoinding to the points by
\begin{equation}
L_{p_i} = p_{i1}x + p_{i2} y + p_{i3}z \in R, \mbox{  for } 1 \leq i \leq n.
\end{equation}

Define an action of $R'$ on $R$ by partial differentiation: 
\begin{equation}
p(x',y',z') \cdot q(x,y,z) = p(\partial/\partial{x},\partial/\partial{y},\partial/\partial{z}) q(x,y,z).
\end{equation}

Since $F$ is a submodule of $R'$, it acts on $R$. The set of elements annihilated by the action of $F$ is denoted by $F^{-1}$.

\begin{thm}[Emsalem and Iarrobino \cite{EI95}]
 Let $F$ be an ideal of fatpoints 
\[
 F = \bigcap_{i=1}^{n} \wp_i^{\alpha_{i}+1},
\]
then 
\begin{equation}
 (F^{-1})_j = 
 \begin{cases}
  R_j                                     &\text{for $j\leq \max{\{\alpha_i\}}$}, \\
  L_{p_1}^{j-\alpha_1}R_{\alpha_1} + \cdots, L_{p_n}^{j-\alpha_n} R_{\alpha_n}     &\text{for $j\geq \max \{\alpha_i +1 \}$}. 
 \end{cases}
\end{equation}
and \[
     \dim_{\mathbb{K}} (F^{-1})_j = \dim_{\mathbb{K}} (R/F)_j.
    \]
\end{thm}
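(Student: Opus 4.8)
The plan is to deduce the entire statement from Macaulay's apolarity pairing, using crucially that in characteristic $0$ differentiation pairs $R'_j$ with $R_j$ perfectly. First I would set up the graded pairing $\langle -,-\rangle\colon R'_j \times R_j \to \mathbb{K}$ given by $\langle p,q\rangle = p\cdot q \in R_0 = \mathbb{K}$. Evaluating on monomials, $\langle (x')^a(y')^b(z')^c,\, x^d y^e z^f\rangle = 0$ unless $(a,b,c)=(d,e,f)$, in which case it equals $a!\,b!\,c!$, and this scalar is nonzero precisely because $\mathrm{char}\,\mathbb{K}=0$. Thus the monomial bases of $R'_j$ and $R_j$ are dual up to nonzero scalars, so the pairing is perfect; this is also exactly what forces the hypothesis $\mathrm{char}\,\mathbb{K}=0$ noted earlier in the paper.

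The next and central step is the reduction lemma: for a homogeneous ideal $F\subseteq R'$ and $q\in R_j$, one has $q\in F^{-1}$ if and only if $\langle p,q\rangle = 0$ for all $p\in F_j$, i.e. $(F^{-1})_j = (F_j)^{\perp}$, the orthogonal complement under the pairing. The forward implication is immediate. For the converse, suppose $q$ is orthogonal to $F_j$ and take $p\in F$ homogeneous of degree $d$. If $d>j$ then $p\cdot q=0$ automatically; if $d\le j$, then for every $r\in R'_{j-d}$ the product $rp$ lies in $F_j$, so $r\cdot(p\cdot q) = \langle rp,q\rangle = 0$. Since this holds for all $r\in R'_{j-d}$ and the pairing is perfect, $p\cdot q=0$. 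Granting this lemma, the dimension assertion is immediate: $\dim_{\mathbb{K}}(F^{-1})_j = \dim_{\mathbb{K}}R_j - \dim_{\mathbb{K}}F_j = \dim_{\mathbb{K}}(R'/F)_j = \dim_{\mathbb{K}}(R/F)_j$, using $\dim_{\mathbb{K}}R_j = \dim_{\mathbb{K}}R'_j$.

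To obtain the explicit description I would exploit that the perfect pairing turns intersection into sum of complements. Since $F_j = \bigcap_i (\wp_i^{\alpha_i+1})_j$, we get $(F_j)^{\perp} = \sum_i \big((\wp_i^{\alpha_i+1})_j\big)^{\perp} = \sum_i \big((\wp_i^{\alpha_i+1})^{-1}\big)_j$, which reduces the problem to a single fat point. For one point I would choose coordinates so that $p=[1:0:0]$, $\wp=\langle y',z'\rangle$, and $L_p=x$; then $(\wp^{\alpha+1})_j$ is spanned by the monomials $(x')^a(y')^b(z')^c$ with $b+c\ge\alpha+1$, so by the monomial duality $q$ is orthogonal to it exactly when $q$ involves only monomials $x^d y^e z^f$ with $e+f\le\alpha$. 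That span is all of $R_j$ when $j\le\alpha$, and equals $x^{\,j-\alpha}R_\alpha = L_p^{\,j-\alpha}R_\alpha$ when $j\ge\alpha$. Summing over $i$ yields $R_j$ as soon as some $\alpha_i\ge j$, hence for $j\le\max\{\alpha_i\}$, and yields $\sum_i L_{p_i}^{\,j-\alpha_i}R_{\alpha_i}$ once $j>\alpha_i$ for all $i$, hence for $j\ge\max\{\alpha_i+1\}$, which is the claimed formula.

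The step I expect to be the main obstacle is the reduction lemma $(F^{-1})_j=(F_j)^{\perp}$, since it is the only place where the ideal structure of $F$, rather than merely its being a graded subspace, is used, and it is precisely what allows the lower-degree differential operators in $F$ to be disregarded. The remaining ingredients, the perfect pairing and the single-point monomial computation, are routine linear algebra once $\mathrm{char}\,\mathbb{K}=0$ guarantees the factorials $a!\,b!\,c!$ are invertible.
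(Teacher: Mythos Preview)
The paper does not prove this theorem; it is quoted from \cite{EI95} as background and used without argument, so there is nothing in the paper to compare your proof against.

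Your argument is correct and is essentially the standard proof. The perfect pairing in characteristic zero, the reduction lemma $(F^{-1})_j=(F_j)^\perp$, the passage from the intersection $F_j=\bigcap_i(\wp_i^{\alpha_i+1})_j$ to the sum of perps, and the single-point monomial computation after a linear change of coordinates are exactly the ingredients in Emsalem--Iarrobino's original paper and in later expositions such as \cite{G96}. One point you might make explicit for a reader: when you move $p_i$ to $[1{:}0{:}0]$ you are tacitly using that the differentiation action of $R'$ on $R$ is $GL_3$-equivariant (with $R'$ carrying the contragredient representation), so that the description $L_{p_i}^{\,j-\alpha_i}R_{\alpha_i}$ is coordinate-independent and survives transport back. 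This is routine, but it is the only place the single-point calculation needs a word of justification to apply globally.
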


Suppose we have an ideal generated by powers of linear forms, and for each 
$j \in \mathbb{N},$ we wish to compute the dimension of
\[
 \langle L_{p_1}^{t_1}, \cdots, L_{p_m}^{t_m} \rangle_j.
\]
Since the $t_i$ are fixed, to apply the approach above we fix a degree $j.$ Put
\begin{equation} \label{eqfj}
F(j) = {\wp_1}^{j- t_1+1} \cap \cdots \cap {\wp_m}^{j-t_m+1}.
\end{equation}
Then
\[
\dim_{\mathbb{K}} \langle L_{p_1}^{t_1}, \cdots, L_{p_m}^{t_m} \rangle_j = \dim_{\mathbb{K}} (R/F(j))_j.
\]
Therefore, 
\begin{equation}
 HF(R/\langle L_{p_1}^{t_1}, \cdots, L_{p_m}^{t_m} \rangle, j) = \dim_{\mathbb{K}} F(j)_j.
\end{equation}

\subsubsection{Fatpoints, Divisors, and Algorithm}
There is a correspondence between the graded pieces of an ideal of fatpoints $F$ and the global sections of a certain line bundle on the
surface $X$ which is the blowup of $\mathbb{P}^2$ at the points. Let $E_i$ be the class of the
exceptional curve over the point $P_i,$ and $E_0$ the pullback of a line on $\mathbb{P}^2.$ 
The canonical divisor on $X$ is:
\[
K_X = -3E_0 + \sum_{i=1}^m E_i.
\]
We also define 
\[A_m = (m-2) E_0 - K_X. \] 
The fatpoint ideal $F(j)$ corresponds to the divisor 
\[
D_j = jE_0 -\sum_{i=1}^m a_i E_i 
\]
where $a_i = j-t_i +1,$ for $i=1,2,\cdots, m.$ Then we have 
\[
F(j)_j = H^0(D_j).
\]

To describe the algorithm to compute $h^0(D)$ for a divisor $D$ on $X,$ we need a few more definitions. A prime divisor is the class of a reduced irreducible curve on $X$ and 
an effective divisor is a nonnegative integer combination of prime divisors. We denote
the set of effective divisors by $\mathit{EFF(X)}$. A divisor whose intersection product with every effective
divisor is $\geq 0$ is called $\mathit{numerically}$ $\mathit{effective}$(nef). We define $\mathit{Neg(X)}$ as the classes
of prime divisors $C$ with $C^2 < 0$. In \cite{GHM} Proposition 3.1 and 4.1, $\mathit{Neg(X)}$ is explicitly determined, which
is the main point for the following algorithm of Geramita, Harbourne, and Migliore to compute $h^0(D)$ for any 
divisor $D$ on $X.$ 

$\boldsymbol{Algorithm}:$ \label{algm} \\
Start with $H = D,$ $N=0$.\\
If $H.C < 0 $ for some $C \in \mathit{Neg(X)},$ replace $H$ by $H-C$ and replace $N$ by $N+C$.
Eventually either $H. A_m < 0$ or $H. C \geq 0$ for all $C \in Neg(X)$.\\
In the first case, $D$ is not effective, and $h^0(D) = 0.$ \\
In the latter case, $H$ is nef and effective and we have a Zariski decomposition 
\[D = H + N,\] 
with
\[ h^0(D) = h^0(H) = (H^2 - H.K_X)/2 + 1.\]

\subsubsection{Computation} 
On $\mathbb{P}^2$, we consider the following seven points
\begin{align*}
P_1 &= [1: 0: 0],\\
P_2 &= [0: 1: 0],\\
P_3 &= [0: 0: 1],\\
P_4 &= [1: 1: 0],\\
P_5 &= [1: 0: 1],\\
P_6 &= [0: 1: 1],\\
P_7 &= [1: 1: 1].
\end{align*}

By the correspondence above, the ideal $L_1 =\langle y^{l+2k}, z^{l+2k}, (x+y)^{2l+2k}, (x+z)^{2l+2k}  \rangle$ 
corresponds to the fatpoint ideal supported at the four points $\{P_2, P_3, P_4, P_5\}.$ To compute $HF(R/L_1, 3l+3k-1),$  we consider the 
corresponding fatpoint ideal 
\[
  F_1 = {\wp_2}^{2l + k} \cap {\wp_3}^{2l + k } \cap {\wp_4}^{l + k} \cap {\wp_5}^{l+k},
\]
and the divisor on the surface $X$ which is the blowup of $\mathbb{P}^2$ at the four points,
\[
  D_1 =  (3l+3k-1) E_0 - (2l + k)(E_2+E_3) - (l+k)(E_4+E_5).
\]

\begin{prop}\label{pl1a}
\begin{equation} 
HF(R/L_1, 3l+3k-1)= h^0(D_1) = 
\begin{cases}   3k^2+2lk-l-k                   & \mbox{if } l \geq k-1, \\
                \frac{-l^2+6lk+5k^2-3l-k}{2}   & \mbox{if } l \leq k-1.
\end{cases}
\end{equation}
\end{prop}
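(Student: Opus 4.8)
The plan is to run the Geramita--Harbourne--Migliore algorithm of the previous subsection on the divisor $D_1 = (3l+3k-1)E_0 - (2l+k)(E_2+E_3) - (l+k)(E_4+E_5)$ on the blowup $X$ of $\mathbb{P}^2$ at the four points $P_2,P_3,P_4,P_5$. First I would record the geometry of these points. A direct determinant check shows that no three of $P_2=[0:1:0]$, $P_3=[0:0:1]$, $P_4=[1:1:0]$, $P_5=[1:0:1]$ are collinear, so by \cite{GHM} the set $\mathit{Neg(X)}$ consists precisely of the four exceptional classes $E_i$ and the six $(-1)$-curves $C_{ij} = E_0 - E_i - E_j$ (the proper transforms of the lines $\overline{P_iP_j}$), with $i,j \in \{2,3,4,5\}$; there are no $(-2)$-curves, since that would require three collinear points.

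Next I would compute the intersection numbers $D_1 \cdot C$ for every $C \in \mathit{Neg(X)}$, writing $D_1 = dE_0 - \sum_i a_i E_i$ and using $E_0^2 = 1$, $E_i^2 = -1$, and vanishing of all mixed products. One finds $D_1 \cdot E_i = a_i > 0$, so no exceptional curve is ever subtracted. For the lines, $D_1 \cdot C_{ij} = d - a_i - a_j$; explicitly $D_1 \cdot C_{23} = k-l-1$, $D_1 \cdot C_{45} = l+k-1 \ge 0$, and $D_1 \cdot C_{24}=D_1 \cdot C_{25}=D_1 \cdot C_{34}=D_1 \cdot C_{35} = k-1$. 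Assuming $k \ge 1$ (the degenerate case $k=0$ being treated separately), the only class that can meet $D_1$ negatively is $C_{23}$, the line through the two points of highest multiplicity, and this occurs exactly when $l \ge k$.

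This yields the case division. If $l \le k-1$, then $D_1 \cdot C \ge 0$ for all $C \in \mathit{Neg(X)}$, so $D_1$ is already nef and effective and the formula $h^0(D_1) = (D_1^2 - D_1 \cdot K_X)/2 + 1$ applies directly; a short computation with $K_X = -3E_0 + E_2+E_3+E_4+E_5$ gives $\frac{-l^2+6lk+5k^2-3l-k}{2}$. If instead $l \ge k$, then $C_{23}$ must be subtracted; since $C_{23}^2 = -1$, each subtraction raises the intersection with $C_{23}$ by one, and starting from $k-l-1$ one subtracts $C_{23}$ exactly $t = l-k+1$ times. The key point is that this single class is the only one ever removed: subtracting $C_{23}$ leaves $D_1 \cdot C_{24},\dots,D_1\cdot C_{35}$ unchanged (both $d$ and the relevant $a_i$ drop by one) and only lowers $D_1 \cdot C_{45}$ to $2k-2 \ge 0$, so the resulting class $H = (2l+4k-2)E_0 - (l+2k-1)(E_2+E_3) - (l+k)(E_4+E_5)$ satisfies $H \cdot C_{23}=0$ and is nef (and $H \cdot A_4 \ge 0$ confirms effectivity). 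Then $h^0(D_1)=h^0(H)=(H^2 - H\cdot K_X)/2 + 1 = 3k^2+2lk-l-k$, where the computation simplifies using the relation that the $E_0$-coefficient of $H$ is twice its $E_2$-coefficient.

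Finally I would check consistency on the boundary $l=k-1$, where both formulas evaluate to $5k^2-4k+1$; this is why the statement may list the first case as $l \ge k-1$ even though $D_1$ is in fact nef there. The main obstacle is the bookkeeping of the subtraction loop when $l \ge k$: one must confirm that no line other than $C_{23}$ acquires negative intersection at any intermediate step and that the loop terminates after exactly $l-k+1$ steps at a genuinely nef class. The other place requiring care is the determination of $\mathit{Neg(X)}$ itself, i.e.\ verifying from the explicit coordinates of $P_2,\dots,P_5$ that the points are in general position so that no unexpected negative curves appear.
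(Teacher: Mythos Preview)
Your proposal is correct and follows essentially the same route as the paper: both identify $C_{23}=E_0-E_2-E_3$ as the unique class in $\mathit{Neg(X)}$ that can meet $D_1$ negatively, peel off $l-k+1$ copies of it when $l\ge k-1$, verify the resulting class is nef by checking against the remaining five line classes, and then apply the Riemann--Roch formula $h^0(H)=(H^2-H\cdot K_X)/2+1$. Your write-up is in fact a bit more careful than the paper's---you explicitly justify that $\mathit{Neg(X)}$ consists only of the $E_i$ and the six $C_{ij}$ by checking that no three of $P_2,\dots,P_5$ are collinear, and you verify that the subtraction loop does not create new negative intersections at intermediate steps---but the core argument is identical.
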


\begin{proof}
On the surface $X,$ we have the following negative classes
\begin{align*}
 C_1 &= E_0 - E_2 - E_3, \\
 C_2 &= E_0 - E_2 - E_4, \\
 C_3 &= E_0 - E_2 - E_5, \\
 C_4 &= E_0 - E_3 - E_4, \\
 C_5 &= E_0 - E_3 - E_5, \\
 C_6 &= E_0 - E_4 - E_5.
\end{align*}

Since
\begin{equation*}
 D_1 \cdot C_1 = (3l+3k-1) - 2(2l+k) = -l+k-1,
\end{equation*}
we consider two cases: $l \geq k-1$ and $l<k-1.$  

In the first case, 
\begin{equation*}
 \begin{split}
 D_1 \sim D_1' &= D_1 - (l-k+1)C_1 \\
               &= (2l+4k-2)E_0 - (l+2k-1)(E_2+E_3) - (l+k)(E_4+E_5), 
 \end{split}
\end{equation*}
which meets $C_1$ nonnegatively. 

Moreover, 
\[
 D_1' \cdot C_i= k-1 \geq 0, \text{ for } i = 2,3,4,5;
\]
and
\[
 D_1' \cdot C_6= 2k-2 \geq 0.
\] 

We conclude that $D_1'$ is nef. Since
\begin{align*}
 D_1'^2         & = 2(2l+3k-1)(k-1),\\
 D_1' \cdot K_X & = -2l-6k+4,
\end{align*}
we have 
\begin{equation}
 h^0(D_1) = h^0(D_1') = 3k^2+2lk-l-k.
\end{equation}

In the other case $l < k-1,$ it is easy to check that
\[
 D_1 \cdot C_i \geq 0, \text{ for } i =1,2,3,4,5,6.
\]
Therefore $D_1$ is nef. Since
\begin{align*}
 D_1^2         &= -l^2+6lk+5k^2-6l-6k+1,\\
 D_1 \cdot K_X &= -3l-5k+3,
\end{align*}
we have 
\begin{equation}
 h^0(D_1) = \frac{-l^2+6lk+5k^2-3l-k}{2}.
\end{equation}
\end{proof}

Similarly, the ideal $L_2$ corresponds to the fatpoint ideal supported at the points $\{P_1, P_2, P_3, P_4, P_5\}.$ To compute $HF(R/L_2,3l+3k-1),$ we consider 
\begin{gather}
  F_2 = {\wp_1}^{2l + k} \cap {\wp_2}^{2l + k} \cap {\wp_3}^{2l + k }\cap {\wp_4}^{l + k } \cap {\wp_5}^{l+k},
\end{gather}
and the divisor on the surface $X$ which is the blowup of $\mathbb{P}^2$ at the five points, 
\begin{equation*}
  D_2 =  (3l+3k-1) E_0 - (2l + k)(E_1+E_2+E_3) - (l+k)(E_4+E_5).
\end{equation*}

\begin{prop} \label{pl2}
\begin{equation}
HF(R/L_2, 3l+3k-1)= h^0(D_2)=
\begin{cases}   \frac{5k^2-3k}{2}.             & \mbox{if } l \geq k-1, \\
                \frac{-l^2+2lk+4k^2-l-2k}{2}   & \mbox{if } l < k-1.
\end{cases}
\end{equation}
\end{prop}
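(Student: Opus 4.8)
The plan is to establish Proposition \ref{pl2} by exactly the method used for Proposition \ref{pl1a}: realize $HF(R/L_2,3l+3k-1)$ as $h^0(D_2)$ on the blowup $X$ of $\mathbb{P}^2$ at the five points $P_1,\dots,P_5$, and then run the Geramita--Harbourne--Migliore algorithm to produce a nef divisor whose self-intersection computes $h^0(D_2)=(H^2-H\cdot K_X)/2+1$. The only genuinely new ingredient compared with the four-point case is the description of $\mathrm{Neg}(X)$, because once $P_1$ enters, two triples of points become collinear: $\{P_1,P_2,P_4\}$ lie on $z=0$ and $\{P_1,P_3,P_5\}$ lie on $y=0$. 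First I would check that these are the only collinear triples among $P_1,\dots,P_5$, so that $\mathrm{Neg}(X)$ consists of the exceptional curves $E_1,\dots,E_5$, the four genuine joining lines $E_0-E_2-E_3$, $E_0-E_2-E_5$, $E_0-E_3-E_4$, $E_0-E_4-E_5$ (each a $(-1)$-curve), and the proper transforms of the two collinear lines, $C_7=E_0-E_1-E_2-E_4$ and $C_8=E_0-E_1-E_3-E_5$, which are now $(-2)$-curves.

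Next I would tabulate the intersection numbers of $D_2$ with these classes. The decisive ones are $D_2\cdot(E_0-E_2-E_3)=-l+k-1$, which changes sign precisely at $l=k-1$ and hence produces the two cases of the statement, and $D_2\cdot C_7=D_2\cdot C_8=-2l-1$, which is negative for every admissible $l$; thus the algorithm must always strip off the two collinear lines. A simplification I would record at the outset is that $C_7\cdot C_8=0$ and that both $C_7,C_8$ are orthogonal to all four joining lines, so the successive subtractions neither interfere with one another nor disturb the intersection numbers with the $(-1)$-lines. Since $C_7^2=C_8^2=-2$, each subtraction of $C_7$ raises $H\cdot C_7$ by $2$, so I would subtract $C_7$ and $C_8$ each exactly $l+1$ times, after which $H\cdot C_7=H\cdot C_8=1\ge 0$.

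In the case $l<k-1$ the remaining line-classes are already met nonnegatively, so $H=D_2-(l+1)(C_7+C_8)$ is nef; here $k>l+1\ge 1$ forces $k\ge 2$, which I would use to verify $H\cdot E_i\ge 0$ for all $i$, and then evaluating $(H^2-H\cdot K_X)/2+1$ yields $\frac{-l^2+2lk+4k^2-l-2k}{2}$. In the case $l\ge k-1$ the class $E_0-E_2-E_3$ is in addition met nonpositively, so I would continue the algorithm by subtracting it $l-k+1$ further times, reaching $H'=D_2-(l+1)(C_7+C_8)-(l-k+1)(E_0-E_2-E_3)$; using the orthogonality relations one checks $H'$ is nef, and its numerical invariants give $\frac{5k^2-3k}{2}$. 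The two formulas agree at the boundary $l=k-1$, a useful consistency check (and they already matched for Proposition \ref{pl1a}).

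The main obstacle I anticipate is handling the $(-2)$-curves correctly. Unlike the four-point computation, where one only ever peels off $(-1)$-curves, here the true Zariski decomposition carries half-integer coefficients on $C_7,C_8$ (because $D_2\cdot C_7=-2l-1$ is odd), so I must argue that the integral greedy subtraction of the algorithm terminates at a genuinely nef $H$ rather than merely a $\mathbb{Q}$-nef class. I would therefore be most careful to confirm $H\cdot C\ge 0$ for every $C\in\mathrm{Neg}(X)$ after the subtractions, and to dispose of the small-$k$ boundary as a separate easy case: notably for $k=1$ the divisor produced degenerates to the class $E_1$, which must itself be contracted, giving $h^0=1$ in agreement with the formula.
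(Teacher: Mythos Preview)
Your proposal is correct and follows essentially the same route as the paper: the paper's $C_1,C_2$ are your $C_8,C_7$, it subtracts $(l+1)C_1$ then $(l+1)C_2$ (arriving at exactly your $H=D_2-(l+1)(C_7+C_8)$, which it calls $D_2''$), then splits on the sign of $D_2''\cdot(E_0-E_2-E_3)=-l+k-1$ and in the case $l\ge k-1$ subtracts $(l-k+1)(E_0-E_2-E_3)$ to reach your $H'$. Your orthogonality remark $C_7\cdot C_8=0$ and $C_7,C_8\perp$ (joining lines) is a clean way to justify doing the two subtractions simultaneously, and your explicit $k=1$ check is a small extra care the paper omits.
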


\begin{proof}
The proof is very similar to that of Proposition \ref{pl1a} and the key is to detemine the negative classes on $X.$
On the surface $X,$ we have the following negative classes
\begin{align*}
 C_1 & = E_0 - E_1 - E_3 - E_5, \\
 C_2 & = E_0 - E_1 - E_2 - E_4, \\
 C_3 & = E_0 - E_2 - E_3,       \\
 C_4 & = E_0 - E_2 - E_5, \\
 C_5 & = E_0 - E_3 - E_4, \\
 C_6 & = E_0 - E_4 - E_5.
\end{align*}

Since $D_2 \cdot C_1 = -2l-1<0,$ we subtract $(l+1)C_1$ from $D_2$ to get 
\[
 D_2' = (2l+3k-2) E_0 - (l + k-1)(E_1+E_3)- (2l+k)E_2 - (l+k)E_4 -(k-1)E_5,
\]
then $D_2' \cdot C_1 >0,$ but
\[
 D_2'\cdot C_2 = -2l-1<0,
\]
we subtract $(l+1)C_2$ from $D_2'$ to get
\[
 D_2'' = (l+3k-3) E_0 - (k-2)E_1- (l+k-1)(E_2+E_3) - (k-1)(E_4+E_5).
\]
Since $D_2'' \cdot C_3  = -l+k-1,$ we consider two cases: $l \geq k-1$ and $l< k-1.$  

\smallskip

Case 1: $l \geq k-1.$ We subtract $(l-k+1)C_3$ from $D_2''$ to get 
\[
 D_2'''= (4k-4)E_0 - (k-2)E_1- (2k-2)(E_2+E_3) - (k-1)(E_4+E_5).
\]
It is easy to check $D_2'''$ is nef. Since
\begin{align*}
 D_2'''^2         &= 5k^2-8k+2,\\
 D_2''' \cdot K_X &= -5k+4,
\end{align*}
we have 
\begin{equation}
 h^0(D_2) = \frac{5k^2-3k}{2}.
\end{equation}

\smallskip

Case 2: $l < k-1.$ In this case, $D_2''$ is nef. Since
\begin{align*}
 D_2''^2         &= 4k^2+2lk-l^2-2l-6k+1,\\
 D_2'' \cdot K_X &= -l-4k+3,
\end{align*}
we have 
\begin{equation}
 h^0(D_2) = \frac{-l^2+2lk+4k^2-l-2k}{2}.
\end{equation}
\end{proof}

Similarly, the ideal $L_3$ corresponds to the fatpoint ideal supported at the points $\{P_2, P_3, P_4, P_5, P_6, P_7\}.$ To compute $HF(R/L_3,3l+3k-1),$ we consider 
\begin{gather}
  F_3 = {\wp_2}^{2l + k} \cap {\wp_3}^{2l + k }\cap {\wp_4}^{l + k } \cap {\wp_5}^{l+k} \cap {\wp_6}^{l+k} \cap {\wp_7}^{3k},
\end{gather}
and the divisor on the surface $X$ which is the blowup of $\mathbb{P}^2$ at the six points
\begin{equation*}
  D_3 =  (3l+3k-1) E_0 - (2l + k)(E_2+E_3) - (l+k)(E_4+E_5+E_6) - 3kE_7.
\end{equation*}

\begin{prop} \label{pl3}
\begin{equation}
HF(R/L_3, 3l+3k-1)= h^0(D_3)= 
\begin{cases} \frac{-k^2-k}{2}+ l(2k-1)   &\mbox{if } l \geq \frac{k+1}{2}, \\
2l^2-2l         & \mbox{if } l \leq \frac{k}{2}. \end{cases}  
\end{equation}
\end{prop}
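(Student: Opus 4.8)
The plan is to mirror the proofs of Propositions \ref{pl1a} and \ref{pl2}: realize $HF(R/L_3, 3l+3k-1)$ as $h^0(D_3)$ on the blowup $X$ of $\mathbb{P}^2$ at the six points $\{P_2,\dots,P_7\}$, determine $\mathit{Neg}(X)$ via \cite{GHM}, and run the Geramita--Harbourne--Migliore algorithm to produce the Zariski decomposition $D_3 = H + N$, after which $h^0(D_3) = h^0(H) = (H^2 - H\cdot K_X)/2 + 1$.

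First I would work out the geometry of the six points. The new feature, compared with the four- and five-point configurations above, is that three of the lines each pass through three of the points: $P_2,P_3,P_6$ lie on $x=0$, $P_3,P_4,P_7$ lie on $x=y$, and $P_2,P_5,P_7$ lie on $x=z$. Their proper transforms give three $(-2)$-classes $M_1 = E_0-E_2-E_3-E_6$, $M_2 = E_0-E_3-E_4-E_7$, $M_3 = E_0-E_2-E_5-E_7$, in addition to the six exceptional curves $E_2,\dots,E_7$ and the six $(-1)$-lines through the remaining pairs (one checks directly that no conic passes through all six points, so there is no further $(-2)$-class). Writing $M_i = E_0 - \sum_{a\in S_i}E_a$, the overlap $|S_i\cap S_j|=1$ gives $M_i\cdot M_j = 1-|S_i\cap S_j| = 0$, so $M_1,M_2,M_3$ are pairwise disjoint; the same count shows the line $L_{67} = E_0 - E_6 - E_7$ is disjoint from all three. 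This disjointness is the structural fact that keeps the algorithm manageable: each negative curve can be stripped off independently, its multiplicity not affecting the intersection numbers of the others.

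Next I would run the algorithm. Since $D_3\cdot M_1 = -2l-1$ and $D_3\cdot M_2 = D_3\cdot M_3 = -2k-1$, and the $M_i$ are disjoint, I subtract $(l+1)M_1 + (k+1)M_2 + (k+1)M_3$, arriving at a candidate nef part $H$ whose intersections against the exceptional curves and the remaining $(-1)$-lines are all nonnegative except possibly against $L_{67}$, where $H\cdot L_{67} = 2l-k-1$. This is exactly where the case division arises. When $l \geq (k+1)/2$ the divisor $H$ is already nef, and the adjunction formula yields $h^0(D_3) = \frac{-k^2+4lk-2l-k}{2} = \frac{-k^2-k}{2} + l(2k-1)$. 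When $l \leq k/2$ I continue, subtracting the further $(k-2l+1)\,L_{67}$ (legitimate because $L_{67}$ is disjoint from the $M_i$), reaching a new nef part with $H^2 = 4l^2-8l+2$ and $H\cdot K_X = -4l+4$, so that $h^0(D_3) = 2l^2-2l$.

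The hard part is not the final adjunction arithmetic but getting $\mathit{Neg}(X)$ exactly right and confirming that the candidate $H$ in each case is genuinely nef against every class of \cite{GHM}; the three triple-collinearities are easy to overlook, and it is their presence (rather than only $(-1)$-lines) that shifts the threshold between the two formulas from $l$ versus $k$ to $l$ versus $k/2$. A secondary obstacle is the small-parameter boundary: for small values of $l$ or $k$ some multiplicities of the candidate $H$ (such as the coefficient $l-2$ at $E_2,E_3$, or $k-2$ at $E_7$) become negative, so additional exceptional curves must be stripped off before $H$ is nef. These degenerate cases have to be checked separately to confirm the stated formulas, in particular that $h^0(D_3)=0$ when $l=1$ and $k\geq 2$.
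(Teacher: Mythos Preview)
Your proposal is correct and follows essentially the same route as the paper: you identify the same three $(-2)$-classes through the triple points (the paper's $C_1,C_2,C_3$), subtract the same multiples to reach the paper's intermediate divisor $D_3'$, split on $D_3'\cdot(E_0-E_6-E_7)=2l-k-1$ exactly as the paper does with $C_9$, and flag the same small-parameter degenerations ($l=1$, $k=1$) that the paper treats separately. Your explicit observation that $M_1,M_2,M_3,L_{67}$ are pairwise disjoint is a nice clarification the paper leaves implicit.
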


\begin{proof}
 On the surface $X,$ we have the following negative classes,
\begin{align*}
 C_1 & = E_0 - E_2 - E_5 - E_7, \\
 C_2 & = E_0 - E_3 - E_4 - E_7, \\
 C_3 & = E_0 - E_2 - E_3 - E_6, \\
 C_4 & = E_0 - E_4 - E_5,       \\
 C_5 & = E_0 - E_4 - E_6, \\
 C_6 & = E_0 - E_5 - E_6, \\
 C_7 & = E_0 - E_2 - E_4, \\
 C_8 & = E_0 - E_3 - E_5, \\
 C_9 & = E_0 - E_6 - E_7.
\end{align*}

Since the computation is very similar, we just sketch the computation.
If $l \geq 2$ and $k \geq 2,$ after subtracting multiples of $C_1,C_2,C_3$ from $D_3,$ we get the divisor
\[
 D_3' = (2l+k-4) E_0 -(l-2)(E_2+E_3)-(l-1)(E_4+E_5)-(k-1)E_6-(k-2)E_7.
\]
We have 
\begin{align*}
 D_3' \cdot C_4 &= k-2,\\
 D_3' \cdot C_5 &= D_3' \cdot C_6 =l-2, \\
 D_3' \cdot C_7 &= D_3' \cdot C_8 = k-1. 
\end{align*}

Since
\[
 D_3' \cdot C_9 = 2l-k-1,
\]
we consider two cases: 

\smallskip

Case 1: $2l\geq k+1.$ In this case, we have
\begin{align*}
 D_3'^2         &= -k^2+4kl-4l-2k+1,\\
 D_3' \cdot K_X &= -2l-k+3.
\end{align*}
The formula for $h^0(D_3)$ follows in this case. 

\smallskip

Case 2: $2l \leq k.$ We subtract $(k+1-2l)C_9$ from $D_3'$ to get 
 $$ D_3'' =(4l-5) E_0 -(l-2)(E_2+E_3) -(l-1)(E_4+E_5)-(2l-2)E_6-(2l-3)E_7. $$

It is easy to check that $D_3''$ is nef. We have 
\begin{align*}
 D_3''^2 &= 4l^2-8l+2, \\
 D_3'' \cdot K_X &= -4l+4,
\end{align*}
then
\begin{equation}
 h^0(D_3'') = 2l^2-2l.
\end{equation}

If $l=1,$ then $D_3$ is not effective and $h^0(D_3) = 0.$

If $k=1,$ and $l \geq 2,$ then   
\[
 D_3 \sim D_3''= (2l-2) E_0 -(l-2)(E_2+E_5) - l(E_3+E_4) - E_6
\]
We have
\[
  h^0(D_3)=  h^0(D_3'') = l-1.
\]
\end{proof}



%

\subsection{Conclusion of the proof}
Recall that
\begin{gather} \label{hf}
 HF(R/I_1,2l+k-1) = HF(R/L_1,3l+3k-1) - HF(R/L_2,3l+3k-1), \\
 HF(R/I_1+I_2,2l+k-1) = HF(R/L_3,3l+3k-1) - HF(R/J_{\phi},3l+3k-1).
\end{gather}

Combining the forumlas in Propositions \ref{pl1a},\ref{pl2},\ref{pl3}, we have 
 \[
  HF(R/I_1,2l+k-1) = \frac{k^2+k}{2} +l(2k-1).
 \]

Since $HF(R/J_{\phi},3l+3k-1) = 0,$ we have
 \[
  HF(R/(I_1+I_2),2l+k-1) =  \frac{-k^2-k}{2}+ l(2k-1).             
 \]

By Equations \eqref{meq1}, \eqref{eqi2}, we get the formula 
\[
 HF(R/I_t, 2l+k-1) =  \binom{2l+k+1}{2},
\]
which implies that $(I_t)_{2l+k-1} = 0,$ exactly what we want 
to conclude that there are no first syzygies of degree $> \max(2l+3k,3l+2k)$
and our constructed syzygies generate all the first syzygies of the ideal $J_{\phi}.$

Since the Hilbert series of $R/J_{\phi}$ is given by 
\[ 
\frac{1-3t^{l+2k} - 3t^{2l+2k} - t^{3l} + 6 t^{2l+3k} + 6t^{3l+2k} - 6t^{3l+3k}}{(1-t)^3},
\]
we know that there are exactly six second syzygies of degree $3l+3k.$
\begin{thm0} \label{thm3v}
The minimal free resolution of the ideal 
$$J_{\phi} = \langle x^{l+2k}, y^{l+2k}, z^{l+2k}, (x+y)^{2l+2k}, (x+z)^{2l+2k}, (y+z)^{2l+2k}, (x+y+z)^{3l} \rangle$$ 
is given by 
$$0 \xrightarrow{} R(-3l-3k)^6 \xrightarrow{d_3}
\begin{array}{c}
 R(-2l-3k)^6\\
\oplus\\
R(-3l-2k)^6 \\
\end{array}
\xrightarrow{}
\begin{array}{c}
R(-l-2k)^3\\ 
\oplus\\
R(-2l-2k)^3 \\
 \oplus \\
R(-3l)\\
\end{array}
\xrightarrow{}
R \xrightarrow{}
R/J_{\phi} \xrightarrow{} 0.$$
\end{thm0}

\begin{rmk}
Even though we have shown there must be six second syzygies of degree $3l+3k,$ we have not been able to find an explicit set of six generators. 
It would be very desirable to find such an explicit set, thus giving the last differential of the minimal free resolution.  
\end{rmk}

\begin{rmk}
For the Postnikov-Shapiro conjecture for $n \geq 4,$ The higher syzygies can probably be constructed inductively from the subideals. But the main difficulty is to show the exactness 
of the complex constructed. 
\end{rmk}

\noindent{\bf Acknowledgments} Thanks to Hal for his support, encouragement and 
many useful conversations. Thanks to Bernd Sturmfels, Madhusudan Manjunath, Frank Schreyer, Alexandra Seceleanu,
Michael DiPasquale, Brian Benson and Jerzy Weyman for helpful conversations. 

\bibliographystyle{amsplain}

\end{document}